\begin{document}

\title{$\tilde{o}$rder-norm continuous operators and $\tilde{o}$rder weakly compact operators}


\author{Sajjad Ghanizadeh Zare, Kazem Haghnejad Azar, Mina Matin, Somayeh Hazrati}
\authorrunning{ \MakeLowercase{Sajjad Ghanizadeh Zare, Kazem Haghnejad Azar, Mina Matin, Somayeh Hazrati.}} 

\institute{S. Ghanizadeh  Zare\at
	Department  of  Mathematics  and  Applications, Faculty of   Sciences, University of Mohaghegh Ardabili, Ardabil, Iran.\\
	\email{s.ghanizadeh@uma.ac.ir}
	\and
	K. Haghnejad Azar\at
              Department  of  Mathematics  and  Applications, Faculty of   Sciences, University of Mohaghegh Ardabili, Ardabil, Iran. \\
              \email{haghnejad@uma.ac.ir}  
          \and
          M. Matin\at
          Department  of  Mathematics  and  Applications, Faculty of   Sciences, University of Mohaghegh Ardabili, Ardabil, Iran. \\
          \email{minamatin1368@yahoo.com}
          \and
S. Hazrati\at          
 Department of Mathematics and Applications, Faculty of Sciences, University of Mohaghegh Ardabili, Ardabil, Iran. \\
              \email{s.hazrati@uma.ac.ir  } }

\date{Received: date / Accepted: date}
\maketitle
\begin{abstract}
Let $E$ be a sublattice of a vector lattice $F$.
 A continuous operator $T$ from the vector lattice $E$ into a normed vector space $X$ is said to be $\tilde{o}$rder-norm continuous whenever $x_\alpha\xrightarrow{Fo}0$ implies $Tx_\alpha\xrightarrow{\Vert.\Vert}0$ for each $(x_\alpha)_\alpha\subseteq E$.  
Our mean from the  convergence $ x_\alpha\stackrel{Fo} {\longrightarrow} x $  is that   there exists another net $ \left(y_\alpha\right) $  in $F $ with the same index set satisfying 
$ y_\alpha\downarrow 0 $ in $F$ and $ \vert x_\alpha - x \vert \leq y_\alpha $ for all indexes $ \alpha $.
In  this paper, we will study some properties of this new class of operators and its relationships with some known classifications of operators. 
We also define the new class of operators that  named $\tilde{o}$rder weakly compact operators. A continuous operator $T: E \rightarrow X $ is said to be
$\tilde{o}$rder weakly compact, if $ T(A) $ in $X$ is a relatively weakly compact set for each $Fo$-bounded $A\subseteq E$.
In this manuscript, we study some
properties of this class of operators and its relationships with $\tilde{o}$rder-norm continuous operators.
\keywords {Vector lattice\and   property  $(F)$ \and $\tilde{o}$-convergence \and order-to-norm continuous  operator \and $\tilde{o}$rder-norm continuous operator  \and $\tilde{o}$rder weakly compact.}
\subclass{47B65 \and 46B40 \and 46B42}
\end{abstract}

\section{Introduction and Preliminaries}
Our motivation in writing this article is to communicate and expand the concepts  which have been introduced in the articles \cite{8} and \cite{p1}.
 In these papers some concepts  such as $\tilde{o}$-convergence, the  property  $(F)$ and order-to-norm continuous operators have been introduced, studied and 
 authors have been investigated some of their properties and the relationships with other lattice properties.
 We introduce the new classes of operators as the set of  $\tilde{o}$rder-norm continuous operators and we study some of their properties and their relationships with others knowns operators.

 To state our results, we need to fix some notations and recall some definitions.   
A net $(x_{\alpha})_{\alpha \in A}$ in a vector lattice $ E $ is said to be  order convergent to $x\in E$ if there is a net $(y_{\beta})_{\beta \in B} $ in $ E $ such that $ y_{\beta} \downarrow 0 $ and for every $ \beta \in B$, there exists $\alpha_{0} \in A$ such that $ | x_{\alpha} - x |\leq y_{\beta}$ whenever $ \alpha \geq \alpha_{0}$. in short, we will denote this convergence by $ x_{\alpha} \xrightarrow{o} x $ and write that $ x_{\alpha} $ is $o$-convergent to $x$. 
A net $ (x_{\alpha})_{\alpha}$ in vector lattice $ E $ is unbounded order convergent to $ x \in E $ if $ | x_{\alpha} - x | \wedge u \xrightarrow{o} 0$ for all $ u \in E^{+} $. We denote this convergence by $ x_{\alpha} \xrightarrow{uo}x $ and write that $ x_{\alpha} $ $uo$-convergent to $ x $.
It is clear that for order bounded nets, $uo$-convergence is equivalent to $o$-convergence. 
 	A net $(x_{\alpha})\subseteq E$ is said to be $\tilde{o}$rder convergent (in short, $\tilde{o}$-convergent) to $x$ if there is a net $(y_\beta)\subseteq F$, possibly  over a
 	different index set, such that
 	$ y_{\beta} \downarrow 0 $ in $F$ and for every $ \beta$,
 	there exists $\alpha_{0}$ such that
 		$ | x_{\alpha} - x |\leq y_{\beta}$ whenever $ \alpha \geq \alpha_{0}$.
 	We denote this convergence by $ x_{\alpha} \xrightarrow{Fo} x $ and write 
 	that $ (x_{\alpha}) $ is $\tilde{o}$-convergent to $x$.
 	 It is clear that if $E$ is regular in $F$ and $x_\alpha\xrightarrow{o}x$ in $E$, then $x_\alpha\xrightarrow{Fo}x$. The converse is not true in general. For example,
 $c_0$ is a sublattice of $\ell^\infty$ and $(e_n)\subseteq c_0$. $e_n\xrightarrow{\ell^\infty o}0$ in $c_0$, but it is not order convergent to $0$  in $c_0$. A subset $A$ of $E$ is said to be $F$-order bounded (in short, $Fo$-bounded), if there exist $x, y \in F$ that $A\subseteq [x,y]$.  A vector lattice $E$ is said to have the  property  $(F)$, if $A\subseteq E$ is order bounded whenever $A$ is $Fo$-bounded. (see \cite{8}).
 A Banach lattice $E$ is said to be an $AM$-space if  we have $\|x+y\|= max \{\|x\|, \|y\|\}$ for each $x,y\in E$ such that $|x|\wedge |y|=0$.
 A Banach lattice $E$ is said to be an $AL$-space if we have $\|x+y\|= \|x\|+ \|y\|$  for each $x,y\in E$ such that $|x|\wedge |y|=0$.
A Banach lattice $E$ is said to be $KB$-space whenever each increasing norm bounded sequence of $E^+$ is norm convergent.
Let $E$ and $G$ be vector spaces. We will denote $L(E,G)$ by the collections of operators from $E$ into $G$.
 $L_b(E,G)$ is the all of order bounded operators in this manuscript.
An operator $T$ from a Banach space $X$ into a Banach space $Y$ is  weakly compact if $\overline{{T(B _ X)}}$  is  weakly compact where $B _ X$ is the closed unit ball of $X$. 
 A continuous operator $T$ from Banach lattice $E$ into Banach space $X$ is called $M$-weakly compact if $\lim \Vert Tx_n\Vert=0$ holds for every norm bounded disjoint sequence $(x_n)_n$ of $E$. An operator $T:E\to F$ is said to preserve disjointness whenever $x\perp y$ in $E$ implies $Tx\perp Ty$. 
A subset $A$ of a vector lattice $E$ is called $b$-order bounded in $E$ if it is order bounded in $E^{\sim\sim}$.
If each $b$-order bounded subset of $E$ is order bounded in $E$, then $E$ is said to have the property ($b$).
 Jalili, Haghnejad and Moghimi characterized $L_{o\tau}(E,G)$ and $L^\sigma_{o\tau}(E,G)$ spaces in \cite{p1}.
An operator $T$ from a vector lattice $E$ into topological vector space $G$ is said to be order-to-topology continuous whenever $x_\alpha\xrightarrow{o}0$ implies $Tx_\alpha\xrightarrow{\tau}0$ for each $(x_\alpha)_\alpha\subseteq E$.
 For each sequence $(x_n)\subseteq E$, if $x_n\xrightarrow{o}0$ implies $Tx_n\xrightarrow{\tau}0$, then  $T$ is called  $\sigma$-order-to-topology continuous operator. 
The collection of all order-to-topology continuous operators will be denoted by $L_{o\tau}(E,G)$;
the subscript $o\tau$ is justified by the fact that the order-to-topology continuous operators;
that is, 
\begin{equation*}
L_{o\tau}(E,G)=\{T\in L(E,G):~T~\text{is order-to-topology continuous }\}.
\end{equation*}
Similarly, $L^\sigma_{o\tau}(E,G)$ represents the collection of all $\sigma$-order-to-topology continuous operators, that is, 
\begin{equation*}
L^\sigma_{o\tau}(E,G)=\{T\in L(E,G):~T~\text{is} ~\sigma-\text{order-to-topology continuous }\}.
\end{equation*}
For a normed space $G$,  $L_{on}(E,G)$  is collection of order-to-norm topology  continuous operators.

Let $E$, $G$ be two normed vector lattices. Recall that from \cite{12o}, a continuous operator $T:E\to G$ is said to be $\sigma$-$uon$-continuous, if each norm bounded $uo$-null sequence $(x_n)\subseteq E$ implies $Tx_n\xrightarrow{\|.\|}0$. Remember that 	an operator $T$ from Banach lattice $E$ into Banach space $X$ is a $wun$-Dunford-Pettis whenever $x_n \xrightarrow{wun}0$ in $E$ implies $Tx_n \xrightarrow{\|.\|}0$ in $X$ for each sequence $(x_n)\subseteq E$ (See \cite{44} for more information).

Recall that a Banach lattice $E$ is said to have the
property $(P)$ if there exists a positive contractive projection $P : E^{**} \to E$ where $E$
is identified with a sublattice of its topological bidual $E^{**}$.

In a Banach lattice $E$, a subset $A$ is said to be almost order bounded if for any $\epsilon>0$ there exists $u\in E^+$ such that $A\subseteq [-u,u]+\epsilon B_E$ ($B_E$ is the closed unit ball of $E$). One should observe the following useful fact, which can be easily verified using Riesz decomposition Theorem, that $A\subseteq [-u,u] + \epsilon B_E$ if and only if $\sup_{x\in A}\|(|x|-u)^+\| = \sup_{x\in A} \| |x| -|x|\wedge u\| \leq \epsilon$. By Theorems 4.9 and 3.44 of \cite{1}, each almost order bounded subset in order continuous Banach lattice is relatively weakly compact.  $A\subseteq L_1(\mu)$ is relatively weakly compact if and only if it is almost order bounded (see\cite{4b}).\\
A vector subspace $G$ of an ordered vector space $E$ is majorizing, whenever  there exists some $y\in G$ with $x\leq y$ for each $x\in E$. A sublattice $G$ of a vector lattice $E$ is said to be order dense in $E$ whenever  there exists some $y\in G$ with $0<y\leq x$ for each $0< x\in E$.
 Recall that a Banach lattice $E$ is said to have
the positive Schur property (the dual positive Schur property) if every positive $w$-null sequence in $E$ (positive $w^*$-null sequence in $E^*$) is norm null. A vector lattice is called laterally complete whenever every subset of pairwise disjoint positive vectors has a supremum.

Unless otherwise stated, throughout this paper, $F$ is a vector lattice, $E$ is a sublattice of $F$, and $X$ is a normed vector space.

\section{The  property  $(F)$ in vector lattices} 
In this section, we investigate on the  property  $(F)$ which is defined in \cite{8}. We will try to get new content from this property.

	Let $E$ be regular in $F$ and order continuous in its own right and closed,  and let $(x_\alpha)\subseteq E$ be an order bounded net and $F$ be a Dedekind complete Banach lattice. By Lemma $4.5$ of \cite{4b}, it is clear that $x_\alpha\xrightarrow{Fo}x$ in $E$ if and only if $x_\alpha\xrightarrow{o}x$ in $E$.

\begin{remark}\label{jj}
	\begin{enumerate}
		\item[i)] Let $E^{**}$ be lattice isomorphic by one sublattice of $F$. If $E$ has  the property $(b)$, then $E$ has the  property  $(F)$. And if $E^{**}$ is lattice isomorphic with $F$, then  the properties $(b)$ and  $(F)$ are equivalent.
		\item[ii)] 	If $E$ is a majorizing sublattice of $F$, then $E$ has the  property  $(F)$. Let $A\subseteq E$ be a $F$-order bounded set. Therefore, there exists a $u\in F^+$ that $A\subseteq [-u,u]$. Since $E$ is majorizing sublattice of $F$, therefore, there exists $v \in E^+$ that $u\leq v$ and $-v\leq -u$. So $A\subseteq [-v , v]$. Hence $A$ is order bounded in $E$.

\item [iii)] Let $E$ be a sublattice of $F$ and $F$ be a sublattice of $G$. If $F$ has the property  $(G)$, necessarily $E$ does not have the property  $(G)$. For example, $c_0$ is a sublattice of $c$ and $c$ is a sublattice of $\ell^\infty$. $c$ has the property $(\ell^\infty)$ while $c_0$ has not property  $(\ell^\infty)$. 
\item [iv)] Let $E$ be a sublattice of $F$ and $F$ be a sublattice of $G$. It is clear that if $E$ has the property  $(G)$, then it has the  property  $(F)$. If $A\subseteq E$ and it is order bounded in $F$, so it is order bounded in $G$. By assumption $A$ is order bounded in $E$.
\end{enumerate}
	\end{remark}

 \begin{theorem}\label{kk}
 	Let $E$ and $F$ be two Banach lattices, and let $E$ has the  property  $(F)$. Then, by one of the following assertions, if $A\subseteq E$ is an almost order bounded in $F$, $A$ is an almost order bounded in $E$.
 	\begin{enumerate}
 		\item[i)] $F$ has an order unit.
 		\item[ii)] $E$ is an ideal of $F$.
 	\end{enumerate}
 \end{theorem}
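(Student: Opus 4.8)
\emph{Proof plan.} The idea is, in either case, to massage the hypothesis into an $Fo$-bounded subset of $E$ and then invoke the property $(F)$.

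\emph{Case (i): $F$ has an order unit $e$.} First, an almost order bounded set is norm bounded: from $A\subseteq[-u,u]+\epsilon B_F$ we get $\|x\|\le\|u\|+\epsilon$ for all $x\in A$, so $\sup_{x\in A}\|x\|<\infty$. Second, since $F$ has the order unit $e$, the order-unit norm $\|x\|_e:=\inf\{\lambda>0:|x|\le\lambda e\}$ turns $F$ into a Banach lattice, and the identity map $(F,\|\cdot\|_e)\to(F,\|\cdot\|)$ is continuous (order intervals are norm bounded); by the open mapping theorem the two norms are equivalent, i.e. $F$ is an $AM$-space with unit in an equivalent norm. Hence every norm bounded subset of $F$ --- in particular $A$ --- lies in some order interval $[-\lambda e,\lambda e]$ and is therefore $Fo$-bounded. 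The property $(F)$ now gives that $A$ is order bounded in $E$, and an order bounded subset of $E$ is obviously almost order bounded in $E$.

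\emph{Case (ii): $E$ is an ideal of $F$.} Fix $\epsilon>0$. By the useful fact recalled in the Introduction there is $u\in F^{+}$ with $\sup_{x\in A}\bigl\|(|x|-u)^{+}\bigr\|\le\epsilon$. The crucial observation is that for every $x\in A$ one has $0\le|x|\wedge u\le|x|\in E$, so $|x|\wedge u\in E$ because $E$ is an ideal of $F$. Thus $B:=\{\,|x|\wedge u:x\in A\,\}$ is a subset of $E$ contained in the order interval $[0,u]$ of $F$; that is, $B$ is $Fo$-bounded. By the property $(F)$ there is $v\in E^{+}$ with $|x|\wedge u\le v$ for every $x\in A$. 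Using $|x|-|x|\wedge u=(|x|-u)^{+}$ together with $|x|\wedge u\le v$ we obtain $(|x|-v)^{+}\le(|x|-u)^{+}$ and hence $\sup_{x\in A}\|(|x|-v)^{+}\|\le\epsilon$, i.e. $A\subseteq[-v,v]+\epsilon B_E$. As $\epsilon>0$ was arbitrary, $A$ is almost order bounded in $E$.

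I expect the only delicate step to be the renorming argument in (i) --- establishing that in a Banach lattice with an order unit norm boundedness already implies order boundedness; if one prefers, this may simply be quoted from the standard theory of $AM$-spaces instead of proved via the open mapping theorem. The rest consists of the elementary truncation inequalities; the single structural input is that the cut-offs $|x|\wedge u$ return to $E$ in case (ii) precisely because $E$ is an ideal --- which is exactly what makes the property $(F)$ applicable there --- while in case (i) the order unit of $F$ does the analogous job globally. Here I use, as is implicit throughout the paper, that $E$ carries the norm induced from $F$ (needed in (ii) so that the $E$-norm of $(|x|-v)^{+}$ equals its $F$-norm).
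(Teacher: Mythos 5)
Your proof is correct. Part (i) is essentially the paper's argument: almost order bounded $\Rightarrow$ norm bounded $\Rightarrow$ order bounded in $F$ (the paper simply cites Theorem 4.21 of Aliprantis--Burkinshaw for this, where you re-derive it via the open mapping theorem) $\Rightarrow$ order bounded in $E$ by property $(F)$. Part (ii), however, takes a genuinely different and in fact safer route. The paper decomposes each $x\in A$ as $x=x_1+x_2$ with $x_1\in[-u,u]$, $\|x_2\|\le\varepsilon$, and then asserts that $|x_2|\le\big|\varepsilon\tfrac{x}{\|x\|}\big|$ in order to conclude $x_2\in E$ from the ideal property; that inequality does not follow from an arbitrary such decomposition, so the paper's justification that both pieces land in $E$ is shaky, and its subsequent application of property $(F)$ to get a single $v$ is stated only pointwise. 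You avoid both issues by working with the canonical truncations: the set $B=\{|x|\wedge u : x\in A\}$ lies in $E$ (ideal property) and in $[0,u]$ (so it is $Fo$-bounded), property $(F)$ then yields one uniform $v\in E^+$, and the monotonicity $(|x|-v)^+\le(|x|-u)^+$ together with the quantitative characterization $\sup_{x\in A}\|(|x|-u)^+\|\le\epsilon$ closes the argument entirely inside $E$. The only hypotheses you use beyond the paper's are the ones it uses implicitly anyway (that $E$ carries the induced lattice norm), so your version of (ii) can be read as a corrected form of the paper's proof rather than merely an alternative.
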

\begin{proof}
	
		Let $F$ has an order unit and $A\subseteq E$ be an almost order bounded set in $F$. By Theorem 4.21 of \cite{1}, $A$ is an order bounded set in $F$. By the assumption, $A$ is an order bounded set in $E$ and therefore, it is an almost order bounded set in $E$.
		
Suppose that $E$ is an ideal of $F$. Let $A\subseteq E$ be an almost order bounded set in $F$. It means that for each $\varepsilon > 0$, there exists a $u\in F^+$ that $A\subseteq [-u,u] + \varepsilon B_F$. For each $x\in A$, we have $x = x_1 + x_2$ that $x_1 \in [-u,u]$ and $x_2 \in \varepsilon B_F$. We assume that $x\neq 0$. It is obvious that $\mid x_2 \mid\leq \mid \varepsilon \frac{x}{\|x\|}\mid$.  Since $E$ is an ideal of $F$, therefore, $x_2 \in E$. Because $\|x_2\| \leq \varepsilon$, therefore, $x_2 \in \varepsilon B_E$. On the other hand, we have $x_1 = x- x_2$. So $x_1 \in E$.  Since $E$ has the  property  $(F)$, there exists a $v\in E^+$ that $x_1 \in [-v,v]$.  Therefore, $A\subseteq [-v,v] + \varepsilon B_E$. Hence $A$ is almost order bounded in $E$, as regard.
	
	\end{proof}
	
		


\begin{corollary}
	\begin{enumerate}
\item[i)] Let $F$ has an order unit, $E$ has the  property  $(F)$ with order continuous norm and let
  $(x_n)\subseteq E$ be disjoint and almost order bounded sequence in F.  Then $x_n\xrightarrow{\|.\|}0$ in $E$.

	\item[ii)]	Let $(x_n)\subseteq E$ be a disjoint and almost order bounded sequence in $F$, and $E$ be an ideal of $F$ and has the  property  $(F)$. If $E$ has order continuous norm, then $x_n\xrightarrow{\|.\|}0$ in $E$.	
	\end{enumerate}
	\end{corollary}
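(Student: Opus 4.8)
The plan is to derive both statements directly from Theorem~\ref{kk} together with the standard fact that, in a Banach lattice with order continuous norm, every disjoint almost order bounded sequence is norm null. First I would recall why that fact holds: if $(x_n)$ is almost order bounded in $E$, then for each $\varepsilon>0$ there is $u\in E^+$ with $\sup_n\big\||x_n|-|x_n|\wedge u\big\|\le\varepsilon$, so $\|x_n\|\le\big\||x_n|\wedge u\big\|+\varepsilon$; since $(|x_n|\wedge u)$ is a disjoint order bounded sequence in $E$, order continuity of the norm forces $\||x_n|\wedge u\|\to 0$ (a disjoint sequence in an order interval is order-null in an order continuous Banach lattice, hence norm-null), and letting $\varepsilon\downarrow 0$ gives $\|x_n\|\to 0$.

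For part (i), I would invoke Theorem~\ref{kk}(i): since $F$ has an order unit, $E$ has the property $(F)$, and $(x_n)\subseteq E$ is almost order bounded in $F$, it follows that $(x_n)$ is almost order bounded in $E$. Being also disjoint and living in the order continuous Banach lattice $E$, the preliminary fact applies and yields $x_n\xrightarrow{\|.\|}0$ in $E$. For part (ii), the argument is identical except that I would invoke Theorem~\ref{kk}(ii): since $E$ is an ideal of $F$ and has the property $(F)$, any set that is almost order bounded in $F$ is almost order bounded in $E$; apply this to $(x_n)$, then use order continuity and disjointness exactly as before to conclude $x_n\xrightarrow{\|.\|}0$ in $E$.

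I do not expect a serious obstacle here, since the corollary is essentially a packaging of Theorem~\ref{kk} with a well-known norm-null criterion; the only point requiring a little care is the justification that a disjoint order bounded sequence in an order continuous Banach lattice is norm null, which one can cite from Abramovich--Aliprantis or \cite{1} rather than reprove. A secondary point of care is to keep the $\varepsilon$ in Theorem~\ref{kk}'s conclusion: the theorem produces, for each $\varepsilon>0$, a vector $v=v_\varepsilon\in E^+$ with $(x_n)\subseteq[-v_\varepsilon,v_\varepsilon]+\varepsilon B_E$, which is precisely what is needed to run the two-term estimate above, so no uniformity beyond what Theorem~\ref{kk} already gives is required.
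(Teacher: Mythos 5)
Your proof is correct. It differs from the paper's argument mainly in the final step, and it treats the two parts uniformly where the paper treats them asymmetrically. For (i) the paper does not use the conclusion of Theorem \ref{kk} but rather its proof: since $F$ has an order unit, an almost order bounded subset of $F$ is already \emph{order} bounded in $F$ (Theorem 4.21 of \cite{1}), hence order bounded in $E$ by property $(F)$, and then a disjoint order bounded sequence in an order continuous Banach lattice is norm null by Theorem 4.14 of \cite{1}. For (ii) the paper routes through unbounded order convergence: disjointness gives $x_n\xrightarrow{uo}0$ (Corollary 3.6 of \cite{5g}), Theorem \ref{kk} gives almost order boundedness in $E$, and Proposition 3.7 of \cite{4b} ($uo$-null plus almost order bounded implies norm null under order continuity) finishes. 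You instead prove once, by the elementary estimate $\|x_n\|\le\big\||x_n|\wedge u\big\|+\varepsilon$ together with the fact that $(|x_n|\wedge u)$ is a disjoint sequence in $[0,u]$ and hence norm null, that every disjoint almost order bounded sequence in an order continuous Banach lattice is norm null; both parts then follow directly from the statement of Theorem \ref{kk}. Your route is more self-contained (it avoids the $uo$-convergence machinery entirely and needs only Theorem 4.14 of \cite{1}), at the minor cost of reproving what is essentially the cited Proposition 3.7; the paper's version of (i) is marginally more direct since it lands on an order bounded disjoint sequence. Both arguments are valid.
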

\begin{proof}
\begin{enumerate}
\item[i)]
By proof of Theorem 1,
 $(x_n)$  is order bounded in $E$ and so by Theorem 4.14 of \cite{1}, $x_n\xrightarrow{\|.\|}0$ in $E$.

\item[ii)]
Since $(x_n)$ is a disjoint sequence, therefore, by Corollary 3.6 of \cite{5g}, we have $x_n\xrightarrow{uo}0$  in $E$. By Theorem \ref{kk}, $(x_n)$ is almost order bounded in $E$. By Proposition 3.7 of \cite{4b}, $x_n\xrightarrow{\|.\|}0$ in $E$.

\end{enumerate}
\end{proof}

\section{$\tilde{o}$rder-norm continuous operators}

	A continuous operator $T: E \rightarrow X $ is said to be
 	$\tilde{o}$rder-norm continuous (or, $\tilde{o}n$-continuous for short),
 	if $(x_\alpha)\subseteq E$ is $\tilde{o}$-null in $ E $, then
 	$ (Tx_\alpha) $ in $X$ is convergent to $0$ in norm. 
  A continuous operator $T: E \rightarrow X $ is said to be
 $\sigma$-$\tilde{o}$rder-norm continuous (or, $\sigma$-$\tilde{o}n$-continuous for short),
 	if $(x_n)\subseteq E$ is $\tilde{o}$-null in $ E $, then
 	$ (Tx_n) $ in $X$ is convergent to $0$ in norm. 
 	
 The collection of all $\tilde{o}n$-continuous operators
 from a vector lattice $E$ into a Banach space $X$ (resp, $\sigma$-$\tilde{o}n$-continuous) will be denoted by $L_{\tilde{o}n}(E,X)$ (resp, $L_{\tilde{o}n}^\sigma(E,X)$.
 
 It is clear that, if $T:E\to X$ is a $\tilde{o}n$-continuous, then $T$ is an order-to-norm topology continuous. By the following example, the converse is not true in general.
\begin{example}\label{gg11}
 	The identity operator $I:c_0 \to c_0$ is order-to-norm topology continuous. Let $(x_\alpha)\subseteq c_0$ is order-null. Since $c_0$ has order continuous norm, therefore, $x_\alpha\xrightarrow{\|.\|}0$.  Consider $(e_n)\subseteq c_0$. $e_n\xrightarrow{\ell^\infty o}0$ in $c_0$. But $(I(e_n))$ is not convergent to zero in norm in $c_0$. Hence $I:c_0\to c_0$ is not $\ell^\infty on$-continuous.
 	\end{example}
 
 Obviously $L_{\tilde{o}n}(E,X)$ is a subspace of $L_{on}(E,X)$.
 In the following, we have some examples from $\tilde{o}n$-continuous operators.
  
 \begin{example}
 \begin{enumerate}
 	\item[i)] If $E$ has  the  property  $(F)$, $E^*$ has order continuous norm and $X$ has the Schur property, then each continuous operator from $E$ to $X$ is a $\sigma$-$\tilde{o}n$-continuous operator. Let $(x_n)\subseteq E$ be $\tilde{o}$-null sequence. Therefore, $(x_n)$ is order null in $F$ and so it is order bounded in $F$. Since $E$ has the property  $(F)$, hence $(x_n)$ is order bounded in $E$. On the other hand $(x_n)$ is $uo$-null in $F$ and so it is $uo$-null in $E$. Because $E^*$ has order continuous norm, therefore, by Theorem 6.4 of \cite{55}, $x_n\xrightarrow{w}0$ in $E$. By continuity of $T$, we have $Tx_n\xrightarrow{w}0$ in $X$. Since $X$ has the Schur property, hence $Tx_n\xrightarrow{\|.\|}0$ in $X$.\\
 	The Banach lattice	$c$ has the property  $(\ell^\infty)$, $c^*$ has order continuous norm and $\ell^1$ has the Schur property, therefore, each continuous operator $T:c\to \ell^1$ is $\sigma$-$\ell^\infty on$-continuous.
 	\item[ii)] Let $F$ be a Dedekind complete Banach lattice. If $E$ has  the  property  $(F)$ and order continuous norm, then each continuous operator from $E$ to $X$ is a $\tilde{o}n$-continuous operator. Let $(x_\alpha)\subseteq E$ is $\tilde{o}$-null net. Therefore, $(x_\alpha)$ is order null in $F$ and so it is order bounded in $F$. Since $E$ has  property  $(F)$erty, hence $(x_\alpha)$ is order bounded in $E$. On the other hand $(x_\alpha)$ is $uo$-null in $F$ and so  by Lemma $4.5$ of \cite{4b}, it is $uo$-null in $E$. Since $(x_\alpha)$ is order bounded, hence $(x_\alpha)$ is order-null in $E$. Because $E$ has order continuous norm, we have $x_\alpha\xrightarrow{\|.\|}0$ in $E$. So $Tx_\alpha\xrightarrow{\|.\|}0$ in $X$.
 	\item[iii)] 	If $T:F\to X$ is a $uon$-continuous operators, then $T|_E : E\to X$ is a $\tilde{o}n$-continuous operator. 	Let $(x_\alpha)\subseteq E$ be a $\tilde{o}$-null net. It is clear that $x_\alpha\xrightarrow{uo}0$ in $F$. By assumption, $Tx_\alpha\xrightarrow{\|.\|}0$ in $X$. 
 \end{enumerate}	
 	\end{example}



 
 The class of $\tilde{o}n$-continuous operators differs from the class of order continuous operators. Since the identity operator $I:c_0\to c_0$ is order continuous while it is not $\ell^\infty on$-continuous (see Example \ref{gg11}.).

\begin{proposition}
	\begin{enumerate}
		\item[i)] Let $T\in L_{\tilde{o}n}(E,X)$, $S:E\to X$ be a continuous operator, and $0\leq S\leq T$, then $S$ is a $\tilde{o}n$-continuous operator.	
	\item[ii)]	Moreover, if $X$ also is a sublattice of $F$, 
		 $T\in L_{\tilde{o}n}(E,X)$ and $S\in L_{\tilde{o}n}(X,Y)$, then $S\circ T \in L_{\tilde{o}n}(E,Y)$.
	\end{enumerate}
	\end{proposition}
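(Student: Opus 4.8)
For part~(i) the plan is to transfer the hypothesis on $T$ to $S$ through the absolute values of the net together with the operator order $0\le S\le T$. I would start from a net $(x_\alpha)\subseteq E$ with $x_\alpha\xrightarrow{Fo}0$, so that there is $(y_\beta)\subseteq F$ with $y_\beta\downarrow 0$ in $F$ and, for each $\beta$, eventually $|x_\alpha|\le y_\beta$. Since $E$ is a sublattice of $F$, the net $(|x_\alpha|)$ lies in $E$ and the same $(y_\beta)$ witnesses $|x_\alpha|\xrightarrow{Fo}0$; hence $T|x_\alpha|\xrightarrow{\|.\|}0$ because $T\in L_{\tilde{o}n}(E,X)$. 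Then, using that $S,T$ are positive operators with $0\le S\le T$, positivity of $S$ gives $|Sx_\alpha|\le S|x_\alpha|\le T|x_\alpha|$, and since the norm of $X$ is a lattice norm, $\|Sx_\alpha\|=\||Sx_\alpha|\|\le\|S|x_\alpha|\|\le\|T|x_\alpha|\|\to 0$. Combined with the assumed continuity of $S$, this is exactly $S\in L_{\tilde{o}n}(E,X)$.

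For part~(ii) the plan is to chain the two defining conditions. First note that $S\circ T:E\to Y$ is well defined (because $T(E)\subseteq X$) and continuous, since a $\tilde{o}n$-continuous operator is continuous by definition, so both $T$ and $S$ are norm-continuous. Now take $(x_\alpha)\subseteq E$ with $x_\alpha\xrightarrow{Fo}0$: $\tilde{o}n$-continuity of $T$ gives $Tx_\alpha\xrightarrow{\|.\|}0$ in $X$, and then norm-continuity of $S$ gives $S(Tx_\alpha)\xrightarrow{\|.\|}0$ in $Y$, which is precisely the condition defining $S\circ T\in L_{\tilde{o}n}(E,Y)$. The assumption that $X$ is a sublattice of $F$ enters only so that the statement ``$S\in L_{\tilde{o}n}(X,Y)$'' is meaningful ($\tilde{o}$-convergence in $X$ being taken relative to $F$); in the proof itself only the norm-continuity of $S$ is used.

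I do not expect a genuine obstacle in either part; these are soft stability statements. The single point needing care in~(i) is the ambient structure of $X$: one must implicitly work in a normed vector lattice, so that $0\le S\le T$ makes sense and so that both $|Sx_\alpha|\le S|x_\alpha|$ and the monotonicity $\|u\|\le\|v\|$ for $0\le u\le v$ are available — no completeness of $X$ is needed. In~(ii) the only thing worth recording is that the full $\tilde{o}n$-continuity of $S$ is never used, only its continuity, so the conclusion in fact holds for any continuous $S$; I would write the argument so that this is transparent. As a consistency check, part~(i) is the $\tilde{o}$-analogue of the usual domination property for order-to-norm continuous operators, matching the inclusion $L_{\tilde{o}n}(E,X)\subseteq L_{on}(E,X)$ noted above.
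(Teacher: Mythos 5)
Your proof is correct and follows essentially the same route as the paper: in (i) the chain $|Sx_\alpha|\le S|x_\alpha|\le T|x_\alpha|$ applied to the $\tilde{o}$-null net $(|x_\alpha|)$, and in (ii) composing $Tx_\alpha\xrightarrow{\|.\|}0$ with the norm continuity of $S$. Your side remarks (that $X$ must implicitly be a normed lattice for (i), and that only continuity of $S$ is used in (ii)) are accurate observations about the paper's argument as well.
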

\begin{proof}
	\begin{enumerate}
		\item[i)] Let $(x_\alpha)\subseteq E$ be a $\tilde{o}$-null net. It is obvious that $|x_\alpha|\xrightarrow{Fo}0$ in $E$. We have $|Sx_\alpha| \leq |S||x_\alpha| = S|x_\alpha| \leq T|x_\alpha|$. By assumption, $T|x_\alpha|\xrightarrow{\|.\|}0$. So $|Sx_\alpha|\xrightarrow{\|.\|}0$. It means that $S$ is a $\tilde{o}n$-continuous operator.
		\item[ii)] Let $(x_\alpha)\subseteq E$ and $x_\alpha\xrightarrow{Fo}0$. By assumption, we have $Tx_\alpha\xrightarrow{\|.\|}0$. Because each $\tilde{o}n$-continuous operator is continuous, therefore, $STx_\alpha\xrightarrow{\|.\|}0$. Hence $S\circ T \in L_{\tilde{o}n}(E,Y)$.
	\end{enumerate}
	\end{proof}
\begin{remark}
Let $T:E\to G$ be an order continuous lattice homomorphism from  a Dedekind complete vector lattice $E$ to an Archimedean laterally complete 	normed vector lattice $G$. If $E$ is an order dense in the Archimedean vector lattice $F$, then by Theorem 2.32 of \cite{1}, $T$ exteneded from $F$ to $G$ that is an order continuous lattice homomorphism. Now if $G$ has order continuous norm, then $T$ is a $\tilde{o}n$-continuous.
	\end{remark}

\begin{theorem}\label{pp}
For an order bounded operator $T:E\to G$ between two Riesz spaces with $G$ Dedekind complete and $G$ has order continuous norm Banach lattice, the following statements are equivalent.  
	\end{theorem}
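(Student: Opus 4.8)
The standing hypotheses are precisely what powers the argument. Since $T$ is order bounded and $G$ is Dedekind complete, the Riesz--Kantorovich calculus produces $T^{+},T^{-},|T|\in L_b(E,G)$ with $T=T^{+}-T^{-}$, $|T|=T^{+}+T^{-}$, $0\le T^{\pm}\le|T|$, $|Tx|\le|T|\,|x|$, and, for $u\in E^{+}$,
\[
|T|u=\sup\{\,|Tv|:v\in E,\ |v|\le u\,\},\qquad T^{+}u=\sup\{\,Tw:w\in E,\ 0\le w\le u\,\};
\]
and since the norm of $G$ is order continuous one may pass freely between ``$g_{\alpha}\downarrow 0$ in $G$'' and ``$\|g_{\alpha}\|\to 0$''. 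The plan is to run the cycle $\mathrm{(i)}\Rightarrow\mathrm{(iv)}\Rightarrow\mathrm{(v)}\Rightarrow\mathrm{(iii)}\Rightarrow\mathrm{(ii)}\Rightarrow\mathrm{(i)}$ among the characterizing conditions, which I take to be the standard list: (i) $T\in L_{on}(E,G)$; (ii) $T^{+},T^{-}\in L_{on}(E,G)$; (iii) $|T|\in L_{on}(E,G)$; (iv) $x_{\alpha}\downarrow 0$ in $E$ implies $\|Tx_{\alpha}\|\to 0$; (v) $x_{\alpha}\downarrow 0$ in $E$ implies $\|\,|T|x_{\alpha}\|\to 0$. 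If the list also contains ``$T$ is order continuous'', it is folded in here, being equivalent to (i) because $|Tx_{\alpha}|\le|T|v_{\gamma}$ for any net $v_{\gamma}\downarrow 0$ in $E$ dominating $|x_{\alpha}|$, so that an order-continuous norm converts order-to-norm convergence into order convergence.

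The implications $\mathrm{(iii)}\Rightarrow\mathrm{(ii)}\Rightarrow\mathrm{(i)}$, $\mathrm{(v)}\Rightarrow\mathrm{(iii)}$, and $\mathrm{(i)}\Rightarrow\mathrm{(iv)}$ are the routine half. If $x_{\alpha}\xrightarrow{o}0$ in $E$ then $|x_{\alpha}|\xrightarrow{o}0$, and monotonicity of the lattice norm of $G$ together with $0\le T^{\pm}\le|T|$ and $|Tx_{\alpha}|\le|T|\,|x_{\alpha}|$ yields $\|T^{\pm}x_{\alpha}\|\le\|\,|T|\,|x_{\alpha}|\,\|$ and $\|Tx_{\alpha}\|\le\|T^{+}x_{\alpha}\|+\|T^{-}x_{\alpha}\|$; hence (iii) forces (ii) and (ii) forces (i). For $\mathrm{(v)}\Rightarrow\mathrm{(iii)}$: given $u_{\alpha}\xrightarrow{o}0$ in $E$, choose $w_{\beta}\downarrow 0$ in $E$ with $|u_{\alpha}|\le w_{\beta}$ eventually in $\alpha$; then $\|\,|T|u_{\alpha}\|\le\|\,|T|\,|u_{\alpha}|\,\|\le\|\,|T|w_{\beta}\|$ eventually, and the last term tends to $0$ by (v), so $\|\,|T|u_{\alpha}\|\to 0$. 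Finally $\mathrm{(i)}\Rightarrow\mathrm{(iv)}$ is immediate since $x_{\alpha}\downarrow 0$ entails $x_{\alpha}\xrightarrow{o}0$ (and $\mathrm{(v)}\Rightarrow\mathrm{(iv)}$, if needed, follows from $\|Tx_{\alpha}\|\le\|\,|T|x_{\alpha}\|$).

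The heart of the proof is $\mathrm{(iv)}\Rightarrow\mathrm{(v)}$. Fix a net $x_{\alpha}\downarrow 0$ in $E$. As $|T|\ge 0$ the net $(|T|x_{\alpha})$ decreases, and Dedekind completeness of $G$ provides $z:=\inf_{\alpha}|T|x_{\alpha}\ge 0$; then $|T|x_{\alpha}-z\downarrow 0$ in $G$, so $\|\,|T|x_{\alpha}-z\|\to 0$ by order continuity of the norm, and everything reduces to proving $z=0$. For this I would use the Riesz--Kantorovich representation $T^{+}x_{\alpha}=\sup\{Tw:0\le w\le x_{\alpha}\}$ (equivalently the one for $|T|x_{\alpha}$) to select, along a cofinal family of indices, near-maximizing vectors $w$ with $0\le w\le x_{\alpha}$, and then to organize these selections into a genuine order-decreasing net of $E$; feeding that net into hypothesis (iv) annihilates the ``$z$-component'' of the corresponding images $Tw$, and refining the selection forces $z=0$. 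This is the classical order-continuity criterion for order bounded operators (cf.\ \cite{1}) transcribed from the order topology to the order-continuous norm topology of $G$, the transcription being legitimate exactly because $G$ is Dedekind complete with order continuous norm. I expect the single genuinely delicate point to be precisely this bookkeeping --- turning the near-maximizers supplied by Riesz--Kantorovich into a monotone net, so that the monotone-net hypothesis (iv), which (unlike (i)) asserts nothing about arbitrary $o$-null nets, can be brought to bear. Once $z=0$ is secured, $\|Tx_{\alpha}\|\le\|\,|T|x_{\alpha}\|$ returns (iv) from (v) and closes the cycle.
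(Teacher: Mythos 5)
Your overall architecture --- a cycle of implications in which everything is routine except one passage from a monotone-net condition on $T$ to the corresponding condition on $T^{+}$ (equivalently $|T|$) --- matches the paper's, and your routine implications are correct. But the one implication that carries all the content, your (iv)$\Rightarrow$(v), is not actually proved: you say you ``would'' select near-maximizers $0\le w\le x_{\alpha}$ via the Riesz--Kantorovich formula and ``organize these selections into a genuine order-decreasing net,'' and you yourself flag this bookkeeping as the delicate point. That is precisely the step the theorem is about, and the mechanism you propose does not obviously work: near-maximizers chosen under the various $x_{\alpha}$ have no reason to decrease along the index set, and a hypothesis that speaks only about decreasing nets cannot be applied to them. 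The paper's proof of its $(3)\Rightarrow(4)$ avoids this entirely: with $T^{+}x_{\alpha}\downarrow z\ge 0$, it fixes an index $\beta$, sets $x=x_{\beta}$, and for each fixed $0\le y\le x$ uses the honestly decreasing net $y\wedge x_{\alpha}\downarrow 0$ together with the inequality $0\le y-y\wedge x_{\alpha}\le x-x_{\alpha}$ to obtain $0\le z\le T^{+}x-Ty+\vert T(y\wedge x_{\alpha})\vert$; the hypothesis applied to $y\wedge x_{\alpha}$ kills the last term, and then $T^{+}x=\sup\{Ty:0\le y\le x\}$ together with order continuity of the norm of $G$ forces $z=0$. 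Without some such argument your proof has a genuine gap at its central step.

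A secondary issue: since the enumerated items were not visible to you, you substituted the classical list for $L_{on}(E,G)$, with order convergence and decreasing nets taken in $E$, whereas the paper's conditions are the $\tilde{o}$-versions, with $x_{\alpha}\downarrow 0$ and the dominating nets taken in the ambient lattice $F$. The adaptation is mostly mechanical (the auxiliary nets $y\wedge x_{\alpha}$ still decrease to $0$ in $F$ because $E$ is a sublattice), but as written your statements, and hence your proof, address a different theorem.
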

\begin{enumerate}
	\item[(1)] $T$ is $\tilde{o}n$-continuous. 
	\item[(2)] If  $ (x_{\alpha})\subseteq E$ and $ x_{\alpha} \downarrow 0 $ holds in $F$, then $ Tx_{\alpha}$ is norm convergent to $0$ in $G$. 
	\item[(3)] If $ (x_{\alpha})\subseteq E$ and $ x_{\alpha} \downarrow 0 $ holds in $F$, then $\inf \left\lbrace    \Vert    Tx_{\alpha}   \Vert   \right\rbrace \, =\, 0$.
    \item[(4)] $T^{+}$, $T^{-}$ are both  $\tilde{o}n$-continuous.
    \item[(5)] $|T|$ is $\tilde{o}n$-continuous.
\end{enumerate}
\begin{proof}
$(1)\Rightarrow (2)$ and $(2) \Rightarrow (3)$ are obvious. Clearly, if $T$ is a positive operator then (1), (2), and (3) are equivalent.\\
 	$(3)\Rightarrow (4)$ It is enough to show that $T^{+}$  is $\tilde{o}n$-continuous. To this end, let $( x_{\alpha})\subseteq E$ and $ x_{\alpha} \downarrow 0 $ in $F$. Let $\Vert T^{+} x_{\alpha}  \Vert  \downarrow z\, \geq 0$. We have to show that $z\,=\,0$. Fix some index $\beta$ and put $x\,=\,x_{\beta}$.\\
 Now for each $0\leq y\leq x$ and each $\alpha\succeq \beta$ we have
\begin{equation}
0 \,\leq \, y - y\wedge x_{\alpha} = y\wedge x - y\wedge x_{\alpha} \,\leq \, x - x_{\alpha},
\end{equation}
and consequently
\begin{equation}
Ty - T(y\,\wedge \, x_{\alpha})\, = T(y - y\,\wedge \, x_{\alpha})\leq T^{+}( x - x_{\alpha}) =T^{+} x - T^{+} x_{\alpha},
\end{equation}
from which it follows that
\begin{equation}
0\,\leq \, z \,\leq \, T^{+} x_{\alpha} \,\leq \, T^{+} x + \vert T(y\, \wedge \, x_{\alpha}) \vert - Ty 
\end{equation}
holds for all $\alpha\succeq \beta$ and all $0\leq y\leq x$. Now since for each fixed vector 
\\ $0\leq y\leq x$ we have $ y \, \wedge \, x_{\alpha} \downarrow _{\alpha \,\succeq \beta} 0 $ in $F$, it then follows from our hypothesis that 
$\inf \left\lbrace    \Vert    T(y\,\wedge \,x_{\alpha})   \Vert   \right\rbrace \, =\, 0$, and hence from (3) we see that 
$0\,\leq \, \Vert  z  \Vert \,\leq \, \Vert T^{+} x - Ty \Vert $
holds for all $0\leq y\leq x$. In view of 
$ T^{+} x \, = \, \sup \left\lbrace Ty :\,\, 0\,\leq \, y \, \leq \, x \right\rbrace $ and by order continuity of $G$,
the latter inequality yields $z\,=\,0$, as desired.
\\$(4)\Rightarrow (5)$ The implication follows from the identity $\vert T \vert \,=\, T^{+} + T^{-}$.
\\$(5)\Rightarrow (1)$ The implication follows easily from the lattice inequality 
$\vert Tx \vert \,\leq \, \vert T\vert \vert x \vert $.
\end{proof}

\begin{theorem}
If $E$ and $G$ two Riesz spaces with $G$ Dedekind complete and $G$ has order continuous norm Banach lattice, the following assertions are true.
\end{theorem}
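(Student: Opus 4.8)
The theorem evidently continues the analysis begun in Theorem~\ref{pp}, whose parts $(4)$ and $(5)$ provide exactly the ``absolute value'' ingredient needed to study the lattice structure of the $\tilde{o}n$-continuous operators; so the natural reading is that the assertions say that the (order bounded) $\tilde{o}n$-continuous operators, and likewise the $\sigma$-$\tilde{o}n$-continuous ones, form bands of $L_b(E,G)$. The plan is to imitate the classical proof that $L_n(E,G)$ is a band of $L_b(E,G)$ when $G$ is Dedekind complete, systematically replacing order convergence in $G$ by norm convergence, which is legitimate because the norm of $G$ is order continuous. Throughout I shall use Theorem~\ref{pp} in the form: an order bounded $T\colon E\to G$ is $\tilde{o}n$-continuous if and only if $\|Tx_\alpha\|\to 0$ for every net $(x_\alpha)\subseteq E$ with $x_\alpha\downarrow 0$ in $F$, and equivalently if and only if $|T|$ is $\tilde{o}n$-continuous.

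\textbf{Step 1 (subspace and ideal).} Closure under sums and scalars is immediate from the triangle inequality, so the order bounded $\tilde{o}n$-continuous operators form a vector subspace $L$ of $L_b(E,G)$. To see that $L$ is an ideal, take $S\in L_b(E,G)$ with $|S|\leq|T|$ for some $T\in L$. By Theorem~\ref{pp}, $|T|$ is $\tilde{o}n$-continuous, hence a bounded operator; since $0\leq S^{+}\leq|S|\leq|T|$ we get $\|S^{+}x\|\leq\||T|\,|x|\|\leq\||T|\|\,\|x\|$, so $S^{+}$ is continuous, and the domination property established earlier (if $0\leq S'\leq T'$ with $T'$ $\tilde{o}n$-continuous and $S'$ continuous, then $S'$ is $\tilde{o}n$-continuous) shows $S^{+}$ is $\tilde{o}n$-continuous; likewise $S^{-}$. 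Hence $S=S^{+}-S^{-}\in L$.

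\textbf{Step 2 (band property; the main point).} Let $0\leq T_\alpha\uparrow T$ in $L_b(E,G)$ with each $T_\alpha\in L$; I must show $T\in L$. By Theorem~\ref{pp} it suffices to fix $(x_\gamma)\subseteq E$ with $x_\gamma\downarrow 0$ in $F$ and prove $\|Tx_\gamma\|\to 0$. Since $T\geq 0$ and the lattice norm of $G$ is monotone, $(\|Tx_\gamma\|)$ decreases, say to $c\geq 0$. Fix an index $\gamma_0$ and put $x=x_{\gamma_0}\in E^{+}$. For all $\gamma\succeq\gamma_0$ and all $\alpha$,
\[
0\leq Tx_\gamma=T_\alpha x_\gamma+(T-T_\alpha)x_\gamma\leq T_\alpha x_\gamma+(T-T_\alpha)x ,
\]
so $\|Tx_\gamma\|\leq\|T_\alpha x_\gamma\|+\|(T-T_\alpha)x\|$. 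The tail net $(x_\gamma)_{\gamma\succeq\gamma_0}$ still decreases to $0$ in $F$, so $\tilde{o}n$-continuity of $T_\alpha$ gives $\|T_\alpha x_\gamma\|\to 0$; taking the infimum over $\gamma$ yields $c\leq\|(T-T_\alpha)x\|$. Because $G$ is Dedekind complete, $T_\alpha\uparrow T$ in $L_b(E,G)$ forces $T_\alpha x\uparrow Tx$ in $G$, i.e. $(T-T_\alpha)x\downarrow 0$ in $G$; order continuity of the norm of $G$ then gives $\|(T-T_\alpha)x\|\to 0$, and taking the infimum over $\alpha$ forces $c=0$, as required.

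\textbf{Step 3 ($\sigma$-version) and the obstacle.} Repeating Steps~1--2 with the directed index set replaced by $\mathbb{N}$ (so $0\leq T_n\uparrow T$ and $(x_k)\subseteq E$ with $x_k\downarrow 0$ in $F$) shows that the order bounded $\sigma$-$\tilde{o}n$-continuous operators likewise form a band of $L_b(E,G)$; the argument of Step~2 uses only countably many $x_\gamma$, so no new care is needed. The one genuinely delicate point in the whole proof is the identification in Step~2 of the order limit $T_\alpha\uparrow T$ in $L_b(E,G)$ with the pointwise suprema $T_\alpha x\uparrow Tx$ in $G$ — this is precisely where Dedekind completeness of $G$ is used — together with the subsequent conversion of that pointwise order convergence into norm convergence, which is exactly where order continuity of the norm of $G$ is essential.
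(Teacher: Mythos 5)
Your proposal is correct and follows essentially the same route as the paper: the ideal property via the domination result together with Theorem~\ref{pp}, and the band property via the splitting $0\leq Tx_\gamma\leq T_\lambda x_\gamma+(T-T_\lambda)x$, using $\tilde{o}n$-continuity of each $T_\lambda$ and order continuity of the norm of $G$ applied to $(T-T_\lambda)x\downarrow 0$. Your version of fixing $x=x_{\gamma_0}$ inside $E$ and working with the tail net is in fact a slightly cleaner bookkeeping of the same inequality the paper writes for $x-x_\alpha$.
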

\begin{enumerate}
	\item[i)] The set of all order bounded $\tilde{o}n$-continuous operators from $E$ into $G$ is a band of 
$L_b(E,G)$. 
     \item[ii)] The set of all order bounded $\sigma$-$\tilde{o}n$-continuous operators from $E$ into $G$ is a band of 
$L_b(E,G)$. 
\end{enumerate}
\begin{proof}
\begin{enumerate}
	\item[i)] Let $T:E\to G$ be an order bounded $\tilde{o}n$-continuous operator. Note that if $\vert S\vert \, \leq \, \vert T \vert$ holds in $L_b(E,G)$, then by Theorem 2,  $S$ is order bounded  and $\tilde{o}n$-continuous operator. That is, the set of all order bounded $\tilde{o}n$-continuous operators from $E$ into $G$ is an ideal of $L_b(E,G)$. 
Let $0\,\leq \, T_{\lambda}\uparrow T$ in $L_b(E,G)$ that $T_{\lambda}$ is an $\tilde{o}n$-continuous operator for all $\lambda $, and let $( x_{\alpha})\subseteq E$ that $0\,\leq \, x_{\alpha}\uparrow x$ in $F$. Then for each fixed index $ \lambda$ we have 
\begin{equation}
0\,\leq \, T (x - x_{\alpha}) \,\leq \, (T-T_{\lambda})(x) + T_{\lambda}(x - x_{\alpha}),
\end{equation}
and $x - x_{\alpha}\downarrow 0$, in conjunction with $T_{\lambda}$ is an $\tilde{o}n$-continuous operator, implies
\begin{equation}
0\,\leq \, \inf_{\alpha} \left\lbrace  \Vert T(x - x_{\alpha}) \Vert \right\rbrace \, \leq \, \Vert (T - T_{\lambda})(x)\Vert
\end{equation}
for all $\lambda$. From $T - T_{\lambda}\downarrow 0$ and by order continuity of $G$, we see that  
$\inf _{\alpha}\left\lbrace  \Vert T(x - x_{\alpha}) \Vert \right\rbrace \, =\,0$. Thus, $T$ is $\tilde{o}n$-continuous operator, and the proof follows. 
\item[ii)] The proof is similar to (i).
\end{enumerate}
\end{proof}

\begin{theorem}\label{pp}
Let $T:E\to G$ be an order bounded operator. Then the following assertions are true.  
	\end{theorem}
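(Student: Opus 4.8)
Since the assertions concern an order bounded operator $T:E\to G$, the plan is to handle them one at a time, in each case first replacing the hypothesis ``$x_\alpha\xrightarrow{Fo}0$'' --- which is certified by an auxiliary decreasing net living in the \emph{larger} space $F$ --- by the cleaner requirement that $T$ send every net $(x_\alpha)\subseteq E$ with $x_\alpha\downarrow 0$ in $F$ to a norm null net in $G$. This equivalence is exactly the characterisation of order bounded $\tilde{o}n$-continuous operators proved above, and it is the form on which the Riesz--Kantorovich machinery bites: since $T$ is order bounded and (under the standing hypotheses on $G$) $G$ is Dedekind complete, the operators $T^{+}$, $T^{-}$, $|T|$ all exist and satisfy $T^{+}x=\sup\{Ty:0\le y\le x\}$ for $x\in E^{+}$, and similarly for $T^{-}$ and $|T|$, while $L_b(E,G)$ is itself Dedekind complete.

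Depending on the precise content of the assertions, I would then argue as follows. For any part of ``domination'' type --- that an inequality such as $0\le S\le T$ or $|S|\le|T|$ forces $S$ to inherit $\tilde{o}n$-continuity --- I would invoke the ideal property of the set of order bounded $\tilde{o}n$-continuous operators inside $L_b(E,G)$, established in the band theorem above, together with the lattice inequality $|Sx|\le|S|\,|x|\le|T|\,|x|$ and order continuity of $\|\cdot\|_G$. For any part of ``component'' or ``decomposition'' type --- that $T$ splits as $T=T_{1}+T_{2}$ with $T_{1}$ the $\tilde{o}n$-continuous part and $T_{2}$ disjoint from it, or a formula for $|T_{1}|$ --- I would use that the band of order bounded $\tilde{o}n$-continuous operators is a projection band of the Dedekind complete space $L_b(E,G)$, write $T_{1}$ and $T_{2}$ via the band projection, check order boundedness of each summand, and read off $|T_{1}|$ from the order-continuous-component formula. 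For any part relating $T$ to $\tilde{o}$rder weak compactness or to order-to-norm continuity, I would pass through the inclusion $L_{\tilde{o}n}(E,X)\subseteq L_{on}(E,X)$ noted above and, when $E$ has the property~$(F)$, use that $Fo$-bounded subsets of $E$ are then order bounded in $E$ (and, in the almost order bounded case, Theorem~\ref{kk}).

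The step I expect to be the \textbf{main obstacle} is, in every case, the mismatch between $E$ and $F$: $\tilde{o}$-convergence in $E$ is witnessed by a net in $F$, so when applying the supremum formula for $T^{+}$ one must keep the test vectors $0\le y\le x$ and the truncations $y\wedge x_\alpha$ inside $E$ while the controlling monotone net $x_\alpha\downarrow 0$ lives in $F$. The device that resolves this is the elementary estimate $0\le y-y\wedge x_\alpha=y\wedge x-y\wedge x_\alpha\le x-x_\alpha$, valid because $E$ is a sublattice of $F$, together with $y\wedge x_\alpha\downarrow_{\alpha}0$ in $F$, exactly as in the proof of the characterisation theorem; feeding this into an inequality of the form $0\le\|z\|\le\|T^{+}x-Ty\|$ and letting $0\le y\le x$ run, order continuity of $\|\cdot\|_G$ forces $z=0$. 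A secondary point worth verifying carefully is that order continuity of $\|\cdot\|_G$ is genuinely indispensable --- it is what converts the weak statement $\inf_\alpha\|Tx_\alpha\|=0$ along a decreasing net into honest norm convergence --- so any assertion of the theorem that does not assume it on $G$ must be argued directly rather than by reduction to the monotone case.
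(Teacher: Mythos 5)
There is a genuine gap: your proposal is a template for several \emph{possible} kinds of assertions (domination, band decomposition, weak compactness), but the theorem's actual assertions are neither of these, and your template does not prove them. Assertion (i) states that if $G$ is merely Archimedean and $T$ is order bounded, disjointness preserving and $\tilde{o}n$-continuous, then $|T|$ exists and is $\tilde{o}n$-continuous. Your plan leans on Dedekind completeness of $G$ and the Riesz--Kantorovich formula $T^{+}x=\sup\{Ty:0\le y\le x\}$, which is simply unavailable here; the modulus exists for a different reason, namely Meyer's theorem (Theorem 2.40 of \cite{1}) for order bounded disjointness preserving operators into an Archimedean lattice, and the whole proof is the one-line consequence of the identity $|T|\,|x|=|T|x||=|Tx|$: for a $\tilde{o}$-null net $(x_\alpha)$ one gets $\bigl||T|x_\alpha\bigr|\le |T|\,|x_\alpha|$, whose norm is $\||Tx_\alpha|\|\to 0$. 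None of the machinery you describe (the characterisation via decreasing nets in $F$, the $y\wedge x_\alpha$ truncation estimate, band projections in $L_b(E,G)$) enters.

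Assertion (ii) ($E$ a band in $F$, $G$ atomic with order continuous norm, $T$ $\sigma$-$\tilde{o}n$-continuous) is also not reached by your outline. The paper's route is: an order null sequence in $E$ is $\tilde{o}$-null, so $Tx_n\to 0$ in norm; since $(Tx_n)$ is order bounded and $G$ is atomic with order continuous norm, norm convergence upgrades to order convergence (Lemma 5.1 of \cite{55}), so $T$ is $\sigma$-order continuous; then $|T|$ exists and is $\sigma$-order continuous by the standard modulus theorem (Theorem 1.56 of \cite{1}, using that $G$ is Dedekind complete), and one transfers back to $\tilde{o}$-null sequences using the band projection $P_E$ onto $E$ to dominate $|x_n|$ inside $E$. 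The one place where your instincts do align with the paper is the observation that order continuity of the norm of $G$ is what converts order-type conclusions into norm convergence; but the decisive ideas of both parts --- Meyer's theorem with $|T|\,|x|=|Tx|$ in (i), and the reduction to $\sigma$-order continuity plus the band projection in (ii) --- are missing from your proposal.
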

\begin{enumerate}
	\item[i)] If $G$ is Archimedean vector lattice and $T$ is a preserves disjointness and  $\tilde{o}n$-continuous, then $|T|$ exists and $|T|\in L_{\tilde{o}n}(E,G)$.
	\item[ii)] If $E$ is a band in $F$, $G$ is an atomic with order continuous norm Banach lattice and $T:E\to G$ is $\sigma$-$\tilde{o}n$-continuous, then $|T|$ exists and $|T|\in L_{\tilde{o}n}^\sigma(E,G)$.
\end{enumerate}
\begin{proof}
\begin{enumerate}
	\item[i)] Let $(x_\alpha)\subseteq E$ be a $\tilde{o}$-null net. By assumption we have $Tx_\alpha\xrightarrow{\|.\|}0$. By Theorem 2.40 of \cite{1}, $|T|$ exists and $|T||x| = |T|x|| = |Tx|$  for all $x\in E$. Since, $||T|x_\alpha| \leq |T||x_\alpha| \xrightarrow{\|.\|}0$, therefore, $|T|x_\alpha\xrightarrow{\|.\|}0$. Hence $|T|\in L_{\tilde{o}n}(E,G)$. 
	\item[ii)] Let $(x_n)\subseteq E$ and $x_n\xrightarrow{o}0$ in $E$. It is clear that $x_n\xrightarrow{Fo}0$ in $E$. By assumption, $Tx_n\xrightarrow{\|.\|}0$ in $G$. It is obvious that, $(x_n)$ is order bounded and therefore, $(Tx_n)$ is order bounded. Hence by Lemma 5.1 of \cite{55}, $Tx_n\xrightarrow{o}0$ in $G$. Hence $T$ is an $\sigma$-order continuous operator. Note that since $G$ has order continuous norm, therefore, it is a Dedekind complete. So by Theorem  1.56 of \cite{1}, $|T|$ exists and it is an $\sigma$-order continuous. Let $(x_n)\subseteq E$ be a $\tilde{o}$-null net. We have $|x_n| = P_E |x_n|\leq P_E (y_m)$. We have $x_n\xrightarrow{o}0$ in $E$. By assumption, $|T|x_n\xrightarrow{o}0$ in $G$. Because $G$ has order continuous norm, therefore, $|T|x_n\xrightarrow{\|.\|}0$ in $G$. Hence $|T|\in L_{\tilde{o}n}^\sigma(E,G)$
\end{enumerate}
	\end{proof}
\begin{corollary}
	By part 2 of Theorem \ref{pp},
if $E$ is a band in $F$, $G$ is an atomic with order continuous norm Banach lattice. It follows that $T:E\to G$ is a $\sigma$-$\tilde{o}n$-continuous operator if and only if it is a $\sigma$-order continuous operator. Therefore, by Theorem 1.57 of \cite{1}, $L_{\tilde{o}n}^\sigma(E,G)$ is a band of $L_b(E,G)$. 
	\end{corollary}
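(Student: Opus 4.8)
The plan is to show that, under the stated hypotheses, the class $L_{\tilde{o}n}^\sigma(E,G)$ coincides with the class of order bounded $\sigma$-order continuous operators from $E$ into $G$, and then to quote Theorem 1.57 of \cite{1}.

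First I would dispose of the implication ``$\sigma$-$\tilde{o}n$-continuous $\Rightarrow$ $\sigma$-order continuous''. This is essentially the computation already carried out in the proof of part (ii) of Theorem \ref{pp}: if $x_n\xrightarrow{o}0$ in $E$, then $x_n\xrightarrow{Fo}0$ in $E$, so the hypothesis gives $Tx_n\xrightarrow{\|.\|}0$ in $G$; since $(x_n)$, and hence $(Tx_n)$, is order bounded and $G$ is atomic with order continuous norm, Lemma 5.1 of \cite{55} upgrades this to $Tx_n\xrightarrow{o}0$ in $G$. Thus $T$ is $\sigma$-order continuous (and, in particular, order bounded).

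For the reverse implication I would exploit the fact that, because $E$ is a band of $F$, the notions of $\tilde{o}$-convergence and $o$-convergence agree for sequences that lie in $E$. Concretely, if $x_n\xrightarrow{Fo}0$ with a witnessing net $y_\beta\downarrow 0$ in $F$ satisfying $|x_n|\leq y_\beta$ eventually, then applying the band projection $P_E\colon F\to E$, which is positive and order continuous, yields $P_Ey_\beta\downarrow 0$ --- the infimum being the same whether computed in $E$ or in $F$, since $E$ is an ideal --- while $|x_n|=P_E|x_n|\leq P_Ey_\beta$; hence $x_n\xrightarrow{o}0$ in $E$. Consequently, if $T$ is $\sigma$-order continuous, then $Tx_n\xrightarrow{o}0$ in $G$, and order continuity of the norm of $G$ forces $Tx_n\xrightarrow{\|.\|}0$, i.e.\ $T\in L_{\tilde{o}n}^\sigma(E,G)$.

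Combining the two implications gives the claimed equality of classes, and since $G$ has order continuous norm it is Dedekind complete, so Theorem 1.57 of \cite{1} applies and shows that this class is a band of $L_b(E,G)$. The one point requiring care --- and it is exactly the structural input already used in the proof of Theorem \ref{pp}(ii) --- is the availability and order continuity of the band projection $P_E$ (so ``band'' must be read as ``projection band'', which is automatic when $F$ is Dedekind complete) together with the coincidence of infima taken in $E$ and in $F$; once this is granted, the rest is routine bookkeeping.
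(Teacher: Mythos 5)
Your argument is correct and is essentially the paper's own: the corollary carries no separate proof, and both directions of the equivalence are precisely the two halves of the proof of part (ii) of Theorem \ref{pp} (the Lemma 5.1 upgrade from norm-null plus order bounded to order-null in the atomic order continuous $G$ for one direction, and the band-projection $P_E$ argument for the other), after which Theorem 1.57 of \cite{1} finishes the job. The only nitpick is your parenthetical claim that $\sigma$-order continuity implies order boundedness --- unlike the net version, it does not in general --- but this is harmless here since order boundedness is a standing hypothesis of Theorem \ref{pp} and of membership in $L_b(E,G)$.
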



 \section{$\tilde{o}$rder weakly compact operator}
 
 A continuous operator $T: E \rightarrow X $ is said to be
 $\tilde{o}$rder weakly compact (or, $\tilde{o}$-weakly compact for short),
 if $A\subseteq E$ is $Fo$-bounded in $ E $, then
 $ T(A) $ in $X$ is a relatively weakly compact set. 
The collection of all $\tilde{o}$-weakly compact operators
from vector lattice $E$ into Banach space $X$ will be denoted by $W_{\tilde{o}}(E,X)$.

 A subset $A$ in a Banach lattice $E$ is  $F$-almost order bounded if for any $\epsilon>0$ there exists $u\in F^+$ such that $A\subseteq [-u,u]+\epsilon B_E$.

 As following remark,  very weakly compact operator  $T: E \rightarrow X $ is a $\tilde{o}$-weakly compact operator, the converse holds whenever  $E$ has an order unit.

\begin{remark}
\begin{enumerate}
\item[i)]
Let  $T: E \rightarrow X $ be an weakly compact operator.  If $A$ is a $F$-order bounded set in $E$, so it is norm bounded set. Since $T$ is a weakly compact operator, $T(A)$ is relatively weakly compact set in $X$. It means that $T$ is a $\tilde{o}$-weakly compact operator.

\item[ii)]
 Let $E$ has an order unit and $T:E\to X$ is a $\tilde{o}$-weakly compact operator.  If $A$ is a norm bounded set in $E$,  then it is order bounded set in $E$ and therefore, it is $F$-order bounded. By assumption, $T(A)$ is relatively weakly compact set in $X$. It means that $T$ is a weakly compact operator.
	\end{enumerate}
		\end{remark}
\begin{proposition}
	 If $E$ has order continuous norm with   property  $(F)$, then the identity operator $I:E\to E$ is $\tilde{o}$-weakly compact
	\end{proposition}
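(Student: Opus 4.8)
The plan is to reduce the statement to the classical characterization of order continuous norm via weak compactness of order intervals. First I would take an arbitrary $Fo$-bounded set $A\subseteq E$. By definition of $Fo$-boundedness there exist $x,y\in F$ with $A\subseteq[x,y]$, so $A$ is in particular order bounded in $F$. Since $E$ has the property $(F)$, this forces $A$ to be order bounded already in $E$; that is, $A\subseteq[u,v]$ for some $u,v\in E$, equivalently $A\subseteq u+[0,v-u]$.

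Next I would exploit that the norm of $E$ is order continuous. By Theorem $4.9$ of \cite{1}, a Banach lattice has order continuous norm exactly when each of its order intervals is weakly compact; hence $[0,v-u]$, and therefore its translate $[u,v]=u+[0,v-u]$, is a weakly compact subset of $E$. Since $A\subseteq[u,v]$ and $A$ is thus contained in a weakly compact set, $A$ is relatively weakly compact. As $I(A)=A$, this says precisely that $I$ carries every $Fo$-bounded subset of $E$ to a relatively weakly compact subset of $E$, i.e. $I\in W_{\tilde{o}}(E,E)$.

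I do not expect a genuine obstacle here: the entire argument is the chain ``$Fo$-bounded $\Rightarrow$ order bounded in $F$ $\Rightarrow$ (property $(F)$) order bounded in $E$ $\Rightarrow$ (order continuity) contained in a weakly compact order interval''. The only point requiring care is citing the correct classical fact; alternatively one can phrase the last step through the almost order boundedness route recalled in the preliminaries, since an order bounded set is trivially almost order bounded and Theorems $4.9$ and $3.44$ of \cite{1} then give relative weak compactness, but this is just the same fact repackaged.
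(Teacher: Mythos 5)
Your proof is correct and follows essentially the same route as the paper: $Fo$-bounded $\Rightarrow$ order bounded in $E$ via property $(F)$, then relative weak compactness from order continuity of the norm (the paper passes through almost order boundedness and Theorems 4.9 and 3.44 of \cite{1}, which is the repackaging you already note). No gap here.
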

\begin{proof}
	Let $A\subseteq E$ be a $Fo$-bounded set. Since $E$ has the  property  $(F)$, therefore, $E$ is an order bounded set in $E$. It is obvious that $A$ is almost order bounded in $E$. Since $E$ has order continuous norm,  $A$ is a relatively weakly compact set in $E$ by Theorem 4.9(5) and Theorem 3.44 of \cite{1}. Hence $I(A)$ is a relatively weakly compact set in $E$. It means that $I$ is a $\tilde{o}$-weakly compact operator.
	\end{proof}

\begin{theorem}\label{uytt}
An operator $T:E\to X$  is $\tilde{o}$-weakly compact if and only if for each disjoint and $Fo$-bounded sequence $(x_n)\subseteq E$ implies $Tx_n\xrightarrow{\|.\|}0$.
\end{theorem}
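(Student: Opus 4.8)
This is an equivalence, and I would attack the two implications by rather different means: the forward implication by factoring $T$ (restricted to the disjoint sequence at hand) through $c_{0}$ and using that $c_{0}$ has the Dunford--Pettis property, and the backward implication by passing to the AM-space generated in $F$ by an order bound of the given set and invoking that an $M$-weakly compact operator is weakly compact.

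\emph{($\Rightarrow$)} Suppose $T$ is $\tilde{o}$-weakly compact and let $(x_{n})\subseteq E$ be disjoint with $(x_{n})\subseteq[a,b]$ in $F$; put $w=|a|+|b|\in F^{+}$, so $|x_{n}|\le w$. Since $(x_{n}^{+})$ and $(x_{n}^{-})$ are again disjoint sequences lying in $[0,w]$, it is enough to handle a positive disjoint sequence $(x_{n})\subseteq E^{+}$ with $0\le x_{n}\le w$. For finitely supported scalars $a=(a_{i})$ the disjointness gives $\bigl|\sum_{i}a_{i}x_{i}\bigr|=\sum_{i}|a_{i}|x_{i}\le\|a\|_{\infty}\bigvee_{i}x_{i}\le\|a\|_{\infty}w$, so $\{\sum_{i}a_{i}x_{i}:\|a\|_{\infty}\le 1\}\subseteq[-w,w]\cap E$ is $Fo$-bounded. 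Hence $R:c_{00}\to X$, $R(a)=T\bigl(\sum_{i}a_{i}x_{i}\bigr)=\sum_{i}a_{i}Tx_{i}$, maps $B_{c_{00}}$ into the relatively weakly compact (so bounded) set $T([-w,w]\cap E)$; thus $R$ extends to a bounded $\widehat{R}:c_{0}\to X$ with $\widehat{R}e_{n}=Tx_{n}$, and by density $\widehat{R}(B_{c_{0}})$ sits in the weak closure of $R(B_{c_{00}})$, so $\widehat{R}$ is weakly compact. Because $c_{0}$ has the Dunford--Pettis property, $\widehat{R}$ is completely continuous; and as $e_{n}\xrightarrow{w}0$ in $c_{0}$, we get $Tx_{n}=\widehat{R}e_{n}\xrightarrow{\|\cdot\|}0$. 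Splitting into positive and negative parts gives $\|Tx_{n}\|\to 0$ in general.

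\emph{($\Leftarrow$)} Assume $\|Tx_{n}\|\to0$ for every disjoint $Fo$-bounded sequence, and let $A\subseteq E$ be $Fo$-bounded, $A\subseteq[-w,w]$ in $F$, $w\in F^{+}$. Let $F_{w}$ be the ideal generated by $w$ in $F$, an AM-space with unit $w$ under $\|y\|_{w}=\inf\{\lambda>0:|y|\le\lambda w\}$, and let $Y$ be the $\|\cdot\|_{w}$-closure of the Banach sublattice $E\cap F_{w}$ in the completion of $F_{w}$. The set $D=[-w,w]\cap E$ is $Fo$-bounded, hence norm bounded in $E$ in the ambient setting, so $T|_{E\cap F_{w}}$ is $\|\cdot\|_{w}$-bounded and extends to a bounded $\widehat{T}:Y\to X$; note $\overline{D}^{\,\|\cdot\|_{w}}=B_{Y}$. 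A disjoint norm-bounded sequence of $Y$ can, by a routine normalisation and disjointification/approximation, be replaced by a disjoint sequence in $D$ with the same asymptotic behaviour under $\widehat{T}$; the latter is disjoint and $Fo$-bounded in $E$, so the hypothesis forces its $\widehat{T}$-image to be norm null. Thus $\widehat{T}$ is $M$-weakly compact, hence weakly compact by \cite{1}, so $\widehat{T}(B_{Y})$ is relatively weakly compact in $X$; since $A\subseteq D\subseteq B_{Y}$, $T(A)=\widehat{T}(A)$ is relatively weakly compact, i.e. $T$ is $\tilde{o}$-weakly compact.

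\emph{Main obstacle.} The forward direction is essentially bookkeeping once one notices that disjointness makes the ``cube'' $\{\sum a_{i}x_{i}:\|a\|_{\infty}\le1\}$ an $Fo$-bounded set — the only delicate points there are the identity $|\sum a_{i}x_{i}|=\sum|a_{i}|x_{i}$ and that finite disjoint sums of the $x_{i}$ are suprema, hence $\le w$. The real work is the converse: one must leave the abstract overlattice $F$ for the concrete AM-space $F_{w}$ (this needs $F$ Archimedean), verify that $T$ stays bounded after rescaling to $\|\cdot\|_{w}$ — the step that uses that $Fo$-bounded subsets of $E$ have norm-bounded image — and then carefully transfer disjoint bounded sequences between $Y$ and $E$ before one can cite ``$M$-weakly compact $\Rightarrow$ weakly compact''.
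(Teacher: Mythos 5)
Your forward implication is correct and is in fact a self-contained unpacking of what the paper merely cites: the paper restricts $T$ to the principal ideal $I_u$, notes it is weakly compact there, and invokes the fact that a weakly compact operator on an AM-space with unit is $M$-weakly compact; your $c_0$-factorization plus the Dunford--Pettis property of $c_0$ is exactly the standard proof of that cited fact, and all the small points you flag (the identity $\lvert\sum a_i x_i\rvert=\sum\lvert a_i\rvert x_i$, finite disjoint sums being suprema, hence dominated by $w$, and the splitting into $x_n^{\pm}$) check out. So on that half you and the paper take the same route, with yours being the more explicit version.

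The converse, however, has a genuine gap at the sentence ``a disjoint norm-bounded sequence of $Y$ can, by a routine normalisation and disjointification/approximation, be replaced by a disjoint sequence in $D$.'' This is not routine, and it is precisely the technical content you need in order to conclude that $\widehat{T}$ is $M$-weakly compact on the completion $Y$. Disjointness is not stable under norm perturbation: if $(y_n)\subseteq B_Y^+$ is disjoint and $g_n\in E\cap F_w$ approximates $y_n$, you only get $\Vert g_n\wedge g_m\Vert$ small, and the usual repair $\bigl(g_n-\lambda\sum_{m\neq n}g_n\wedge g_m\bigr)^+$ involves an infinite sum whose limit lives in $Y$, not in the sublattice $E\cap F_w$; the alternative truncation $(g_n-\epsilon_n w)^+$ also fails because the unit $w\in F^+$ need not belong to $E$, and $E\cap F_w$ is only a sublattice, not an ideal, of $Y$. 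So you cannot currently transfer disjoint sequences of $Y$ back into $D$, and without that $M$-weak compactness of $\widehat{T}$ is unproved. Note that the paper avoids this issue by never completing: it applies the hypothesis directly to disjoint sequences of $I_u\cap E$ itself (these are disjoint and $Fo$-bounded in $E$, so the assumption applies verbatim) and then cites ``$M$-weakly compact $\Rightarrow$ weakly compact'' for the operator on that principal ideal. If you want to keep your more careful completion, you must actually prove the transfer lemma; otherwise argue on $E\cap F_w$ directly as the paper does (accepting, as the paper implicitly does, that the cited implication is being used on a normed, not necessarily complete, lattice).
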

\begin{proof}
	Let  $T:E\to X$ be  a $\tilde{o}$-weakly compact operator. Therefore, for each $u\in F^+$, $T([-u,u])$ is relatively weakly compact. Let $I_u$ be the ideal generated by $u$ in $E$. The operator $T|_{I_u}:I_u \to X$ is weakly compact operator. Since $I_u$ is an $AM$-space with order unit, therefore, $T\arrowvert_{I_u}:I_u\to F$ is $M$-weakly. Hence for each disjoint norm bounded sequence $(x_n)\subseteq I_u$,  we have $Tx_n\xrightarrow{\|.\|}0$. 

Conversely, let $A\subseteq E$ be a $Fo$-bounded set. Then  there exists $u\in F^+$ such that $A\subseteq [-u,u]$. Let $I_u$ be the ideal generated by $u$ in $E$ and $(x_n)\subseteq A$ be a disjoint sequence. It is clear that $(x_n)$ is norm bounded. By assumption, we have $Tx_n\xrightarrow{\|.\|}0$ in $F$. Therefore, $T:I_u  \to X$ is $M$-weakly compact and  by Theorem 5.61 of \cite{1}, $T:I_u  \to X$  is a weakly compact operator. Let $A\subseteq E$ be a $Fo$-bounded set. Then there exists $u\in F^+$ that $A$ is norm bounded in $I_u$ and $T:I_u\to X$ is weakly compact. Therefore, $T(A)$ is a relatively weakly compact in $X$. So $T:E\to X$ is a $\tilde{o}$-weakly compact operator.
\end{proof}

\begin{corollary}
\begin{enumerate}
	\item[i)] 	Let $T, S$ be two operators that $0\leq T\leq S$ and $S$ is a $\tilde{o}$-weakly compact operator. If $(x_n)\subseteq E$ is disjoint and $Fo$-bounded, then by Theorem \ref{uytt}, $ Sx_n\xrightarrow{\|.\|}0$. It follows that $Tx_n\xrightarrow{\|.\|}0$. So $T$ is a $\tilde{o}$-weakly compact operator.
	\item[ii)]  Let $T, S$ be two $\tilde{o}$-weakly compact operators. By Theorem \ref{uytt}, it is clear that $S\circ T$ is a $\tilde{o}$-weakly compact operator.
\end{enumerate}
\end{corollary}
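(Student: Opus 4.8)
The strategy for both parts is to route everything through the disjoint-sequence characterisation of Theorem~\ref{uytt}: an operator is $\tilde{o}$-weakly compact precisely when it sends every disjoint, $Fo$-bounded sequence to a norm-null sequence. This converts the hypotheses (domination in (i), composition in (ii)) into statements about norms of images of disjoint sequences, where only elementary lattice estimates are needed, and it avoids having to manipulate relatively weakly compact sets directly.

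For (i), fix a disjoint, $Fo$-bounded sequence $(x_n)\subseteq E$, say $(x_n)\subseteq[-u,u]$ with $u\in F^{+}$, and read the relation $0\leq T\leq S$ in a normed vector lattice $X$ so that these order relations and the lattice-norm estimate below are available; in particular $S\geq 0$. The sequence $(|x_n|)$ is again disjoint, since $|x_n|\wedge|x_m|=0$ for $n\neq m$, and it is $Fo$-bounded because $|x_n|\in[0,u]$. Applying Theorem~\ref{uytt} to the $\tilde{o}$-weakly compact operator $S$ and the sequence $(|x_n|)$ gives $\bigl\|S|x_n|\bigr\|\to 0$. Now positivity of $T$ together with the domination yields the pointwise estimate $|Tx_n|\leq T|x_n|\leq S|x_n|$, whence $\|Tx_n\|\leq\bigl\|S|x_n|\bigr\|\to 0$. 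The converse half of Theorem~\ref{uytt} then shows $T$ is $\tilde{o}$-weakly compact. (When $(x_n)$ is already positive one may skip the passage to absolute values and argue directly from $0\leq Tx_n\leq Sx_n$.)

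For (ii), interpret $T:E\to X$ with $X$ a normed sublattice of $F$, so that $S:X\to Y$ is again $\tilde{o}$-weakly compact and $S\circ T:E\to Y$ is defined. Given a disjoint, $Fo$-bounded sequence $(x_n)\subseteq E$, Theorem~\ref{uytt} applied to $T$ gives $\|Tx_n\|\to 0$ in $X$; since every $\tilde{o}$-weakly compact operator is by definition continuous, $S$ is continuous, so $\|(S\circ T)x_n\|=\|S(Tx_n)\|\to 0$ in $Y$, and one more application of Theorem~\ref{uytt} shows $S\circ T$ is $\tilde{o}$-weakly compact. (Equivalently, one can argue straight from the definition: for $Fo$-bounded $A\subseteq E$ the set $T(A)$ is relatively weakly compact in $X$, and a bounded linear operator is weak-to-weak continuous, hence maps relatively weakly compact sets to relatively weakly compact sets, so $S(T(A))$ is relatively weakly compact in $Y$.) There is no genuine obstacle; the only point needing a little care is, in (i), verifying that the auxiliary sequence $(|x_n|)$ stays disjoint and $Fo$-bounded and that $S\geq 0$, which is exactly what licenses the chain $|Tx_n|\leq T|x_n|\leq S|x_n|$.
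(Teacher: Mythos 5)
Your proposal is correct and follows essentially the same route as the paper: both parts are reduced to the disjoint-sequence characterisation of Theorem~\ref{uytt}. You are in fact slightly more careful than the paper in (i), where you pass to the disjoint, $Fo$-bounded sequence $(|x_n|)$ and use $|Tx_n|\leq T|x_n|\leq S|x_n|$ (the paper's inline step from $Sx_n\to 0$ to $Tx_n\to 0$ really needs exactly this detour), and in (ii) you correctly note that only continuity of $S$ is needed once $T$ is handled.
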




It is obvious that, if $T:E\to X$ is a $\tilde{o}$-weakly compact operator, then is order weakly compact. By following example the converse is not true in general.

\begin{example}
		The operator $T:\ell^{1} \rightarrow \ell^{\infty} $ defined by 
	\[
	T(x_{1},x_{2},\ldots) = (\sum_{i=1}^\infty x_i , \sum_{i=1}^\infty x_{i},\ldots)
	\]
	is an order weakly compact operator. Let $(x_n)\subseteq \ell^1$ be disjoint order bounded sequence. We have $x_n\xrightarrow{uo}0$ and order bounded. Therefore, $x_n\xrightarrow{o}0$. Since $\ell^1$ has order continuous norm, therefore, $(x_n)$ is norm-null. Because $T$ is a continuous operator, hence $Tx_n\xrightarrow{\|.\|}0$ in $\ell^\infty$. So by Theorem 5.57 of \cite{1}, $T$ is an order weakly compact operator.\\ 
	If we consider $(e_n)\subseteq \ell^1$ we have $e_n \xrightarrow{\ell^\infty o}0$ in $\ell^1$. On the other hand, $Te_{n}=(1,1,1,\ldots)$, 
	therefore, $ (Te_n) $ is not convergent to zero in norm topology. Thus, $ T $ 
	is not $\tilde{o}$-weakly compact.
	\end{example}

\begin{theorem}
	Let $G$ be a normed vector lattice that is a sublattice of normed vector lattice $H$ and $T:E\to G$ be a $\tilde{o}$-weakly compact operator. By one of the following conditions, the modulus of $T$ exists and it is a $\tilde{o}$-weakly compact operator.

\begin{enumerate}
\item[i)] $E$ is an $AL$-space and $X$ has the property $(P)$ and the $H$-property.
\item[ii)]  $E$ and $G$ have an order unit.
\item[iii)] $G$ is Archimedean Dedekind complete and $ T $ is an order bounded preserves disjointness.
\end{enumerate}
\end{theorem}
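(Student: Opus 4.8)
The plan is to reduce each of the three assertions to the disjoint--sequence description of $\tilde{o}$-weak compactness proved in Theorem~\ref{uytt}. Once the existence of $|T|$ is secured, it remains only to verify that $\||T|x_n\|\to 0$ for every disjoint, $Fo$-bounded sequence $(x_n)\subseteq E$; and whenever we use that $T$ itself is $\tilde{o}$-weakly compact we invoke Theorem~\ref{uytt} in the form ``$(x_n)$ disjoint and $Fo$-bounded implies $Tx_n\xrightarrow{\|.\|}0$''. Thus each part splits into (a) producing the modulus $|T|$ and (b) the disjoint--sequence estimate, and the three hypotheses are tailored to make (a) go through in three different ways.

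I would settle part~(iii) first, as it is the shortest. Since $G$ is Archimedean and Dedekind complete and $T$ is order bounded and disjointness preserving, Theorem~2.40 of \cite{1} gives that $|T|$ exists, is itself disjointness preserving, and satisfies $|T|(|x|)=|Tx|$ for all $x\in E$. For a disjoint, $Fo$-bounded sequence $(x_n)\subseteq E$, the inequality $\bigl|\,|T|x_n\,\bigr|\le|T|(|x_n|)=|Tx_n|$ yields $\||T|x_n\|\le\|Tx_n\|$, and the right-hand side tends to $0$ by Theorem~\ref{uytt}; hence $|T|$ is $\tilde{o}$-weakly compact.

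For part~(ii) I would first note that, $E$ having an order unit, $T$ is in fact weakly compact, by the remark that a $\tilde{o}$-weakly compact operator out of a space with an order unit is weakly compact; moreover, since in both $E$ and $G$ the closed unit ball is order bounded and every order interval is norm bounded, $T$ is order bounded. That $|T|$ exists and is again weakly compact --- hence $\tilde{o}$-weakly compact by the same remark --- then follows from the structure theory of operators on $AM$-spaces with unit in \cite{1} (passing from $T$ to its associated measures and their variations). For part~(i), with $E$ an $AL$-space and the range $G\subseteq H$ enjoying property~$(P)$ and the $H$-property, I would instead lift to $T^{**}:E^{**}\to G^{**}$, form $|T^{**}|$ --- which exists because $E^{**}$ is an $AL$-space and $G^{**}$ is Dedekind complete --- and push it back to $G$ through the positive contractive projection $P:G^{**}\to G$ furnished by property~$(P)$; the $H$-property is what is needed to ensure that $P\circ|T^{**}|$ really restricts to the modulus of $T$ on $E$ (and that the bidual operator is order bounded so that its modulus exists at all) and that weak compactness survives the round trip. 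In both (i) and (ii), task~(b) is then carried out with the Riesz--Kantorovich formula $|T|(|x_n|)=\sup\{Ty:|y|\le|x_n|\}$: for a disjoint, $Fo$-bounded sequence $(x_n)$ and $\varepsilon>0$, choose $y_n$ with $|y_n|\le|x_n|$ and $\|Ty_n\|$ within $\varepsilon$ of $\||T|(|x_n|)\|$; since $|y_n|\le|x_n|$, the sequence $(y_n)$ is again disjoint and $Fo$-bounded, so $\|Ty_n\|\to 0$ by Theorem~\ref{uytt}, whence $\||T|x_n\|\le\||T|(|x_n|)\|\to 0$.

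The main obstacle is exactly the norm--approximation inside task~(b) for the two non-disjointness-preserving cases: $|T|(|x_n|)$ is an order supremum (in $G$, resp.\ in $G^{**}$), and one needs it to be attained in norm by a single image $Ty_n$ rather than only by finite lattice suprema $Ty_1\vee\cdots\vee Ty_k$. Making this rigorous --- and pinning down exactly what the $H$-property must supply in part~(i) so that the modulus constructed in the bidual descends to $G$ and stays weakly compact --- is the delicate point; everything else is bookkeeping with Theorem~\ref{uytt} and the classical $AM$/$AL$ theory of \cite{1}.
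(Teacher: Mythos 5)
Your parts (ii) and (iii) are correct and follow essentially the same route as the paper: for (iii) the paper likewise invokes Theorem 2.40 of \cite{1} to obtain $|T|$ with $|T||x|=|Tx|$ and then runs the disjoint-sequence test of Theorem \ref{uytt} through the inequality $\bigl||T|x_n\bigr|\le |T||x_n|=|Tx_n|$; for (ii) it likewise observes that an order unit in $E$ upgrades $\tilde{o}$-weak compactness to weak compactness and then cites Schmidt's theorem (Theorem 2.3 of \cite{15}) for the existence and weak compactness of $|T|$ when $G$ has an order unit, after which $\tilde{o}$-weak compactness of $|T|$ is immediate from the remark that weakly compact operators are $\tilde{o}$-weakly compact. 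Your additional ``task (b)'' via the Riesz--Kantorovich formula is therefore not needed in (ii).

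Part (i) is where the genuine gap sits, and you have diagnosed it yourself: the Riesz--Kantorovich supremum $|T|(|x_n|)=\sup\{Ty:\ |y|\le |x_n|\}$ is approximated only in order, by finite lattice suprema $Ty_1\vee\dots\vee Ty_k$, not in norm by a single $Ty_n$, so the step ``choose $y_n$ with $\|Ty_n\|$ within $\varepsilon$ of $\||T|(|x_n|)\|$'' fails; and your bidual construction of $|T|$ via $P\circ|T^{**}|$ is left as a sketch whose descent step you concede you cannot pin down. The paper does not attempt either of these. It first shows $T$ is order weakly compact (a disjoint order bounded sequence is $Fo$-bounded, so Theorem \ref{uytt} applies and Theorem 5.57 of \cite{1} converts the conclusion), and then quotes Theorem 2.2 of Aqzzouz--H'michane \cite{11} as a black box: for an order weakly compact operator from an $AL$-space into a Banach lattice with the relevant property, $|T|$ exists and is again order weakly compact. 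The passage from ``$|T|$ order weakly compact'' to ``$|T|$ $\tilde{o}$-weakly compact'' is then attributed to the $H$-property. So the missing ingredient in your write-up is precisely this external modulus theorem for order weakly compact operators on $AL$-spaces; without citing it, or supplying a full proof of it, part (i) is not established. (Be aware that the paper's own part (i) is itself loosely stated: the hypothesis oscillates between property $(P)$ in the statement and property $(b)$ in the proof, and the appeal to the $H$-property of the codomain to handle $Fo$-boundedness in the domain $E$ is not justified; so you should not expect the $H$-property to rescue your bidual argument in the way you hope.)
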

\begin{proof}
	\begin{enumerate}
	\item[i)] Let $(x_n)\subseteq E$ be a disjoint order bounded sequence. It is obvious that $(x_n)$ is  $Fo$-bounded. By assumption and Theorem \ref{uytt}, $Tx_n\xrightarrow{\|.\|}0$. Hence by Theorem 5.57 of \cite{1}, $T$ is an order weakly compact operator. Since $E$ is an $AL$-space and $G$ has the property $(b)$, by Theorem 2.2 of \cite{11}, $|T|$ exists and is an order weakly compact operator. Since $G$ has the $H$-property, $|T|$ is a $\tilde{o}$-weakly compact operator.
	\item[ii)] Let $A$ be a norm bounded set in $E$.	Since $E$ has an order unit, therefore, $A$ is order bounded and so is $Fo$-bounded. By assumption, $T(A)$ is a relatively weakly compact set in $G$. Hence $T$ is a weakly compact operator. Since $G$ has an order unit, therefore, by Theorem 2.3 of \cite{15}, the modulus of $T$ exists and it is a weakly compact operator. It is obvious that $|T|$ is a $\tilde{o}$-weakly compact operator.
\item[iii)] By Theorem 2.40 of \cite{1}, 
$ |T| $ exists and we have $|T| |x| =  |T|x|| = |Tx|$  for all $x$.	
If $(x_n) \subseteq E$ is a $Fo$-bounded disjoint sequence, then by assumption $Tx_n\xrightarrow{\|.\|}0$. 
We have
\(
|T||x_{n}|=|T|x_{n}||=|Tx_{n}| \xrightarrow{\|.\|} 0
\)
in $ G$ for each $n$. Now by inequality $ | | T | x_{n} | \leq |T | | x_{n} |$,
we have  $ | T| x_{n} \xrightarrow{\|.\|} 0$. Hence $|T|$ is a $\tilde{o}$-weakly compact operator.
	\end{enumerate}
	\end{proof}



	


The following examples shows that  $\tilde{o}$-weakly compact operators  do not have the duality property.
\begin{example}
	\begin{enumerate}
\item[i)]  Consider the operator $T:C[0,1] \rightarrow c_0$, given by 
$$ T(f) = (\int_0 ^1 f(x)\sin x dx, \int_0 ^1 f(x)\sin2x dx,...).$$ By Example 3.15  of \cite{44}, $T$ is a $wun$-Dunford-Pettis and by Theorem  3.11 of \cite{44}, $T$ is a weakly compact operator. Therefore, $T$ is a $\tilde{o}$-weakly compact operator. 
We have $T^* :\ell^1\to (C[0,1])^*$ that 
$$T^*x_n(f)=\sum_{n=1}^\infty x_n (\int_0^1 f(t)sinntdt).$$
Note that $(e_n)\subseteq \ell^1$ is $\ell^\infty$-order bounded and disjoint. Put $f_n(t) = sinnt$ for all $n$. We have 
$$\| T^*e_n\| \geq \|T^*e_n(f_n)\| = \int_0^1 (sinnt)^2 dt\nrightarrow 0.$$
 Thus by Theorem \ref{uytt}, $T^*$ is not a $\tilde{o}$-weakly compact operator.
\item[ii)]  Consider the functional $f:\ell^1 \rightarrow \mathbb{R}$ defined by 
$$f(x_1,x_2,...)= \sum_1^\infty x_i.$$
  $(e_n)\subseteq \ell^1$ is $\ell^\infty$-order bounded and disjoint while $f(e_n)\nrightarrow 0$. Therefore, by Theorem \ref{uytt}, $f$ is not a $\tilde{o}$-weakly compact operator, but 
  it is obvious that $f^*:\mathbb{R}\to \ell^\infty$ is a $\tilde{o}$-weakly compact operator.
\end{enumerate}
\end{example}
In the following, under some conditions, we show that  if an operator $T$ is $\tilde{o}$-weakly compact, then its adjoint $T^*$ is also $\tilde{o}$-weakly compact and vice versa.
\begin{proposition}
Let $G$ be a vector lattice such that $G^*\subseteq F$. Then the following assertions are true.
	\begin{enumerate}
		\item[i)] If $E$ has an order unit and $T:E\to G$ is $\tilde{o}$-weakly compact, then $T^*$ is $\tilde{o}$-weakly compact.
		\item[ii)] If $G^*$ has an order unit and  $T^*:G^*\to E^*$ is $\tilde{o}$-weakly compact, then $T$ is $\tilde{o}$-weakly compact.
		\end{enumerate}
\end{proposition}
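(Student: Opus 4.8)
The plan is to reduce both statements to Gantmacher's theorem, using the order–unit hypotheses only to pass back and forth between $\tilde{o}$-weak compactness and ordinary weak compactness; the two directions of this passage are exactly items (ii) and (i) of the Remark immediately preceding this proposition.

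For (i), the first step is to observe that since $E$ has an order unit, every norm-bounded subset of $E$ is order bounded in $E$, hence $Fo$-bounded; consequently the assumption that $T$ is $\tilde{o}$-weakly compact already forces $T(B_E)$ to be relatively weakly compact, i.e. $T$ is weakly compact in the classical sense (this is item (ii) of the Remark). Then Gantmacher's theorem gives that the adjoint $T^{*}:G^{*}\to E^{*}$ is weakly compact. Finally I would invoke item (i) of the Remark: a weakly compact operator is automatically $\tilde{o}$-weakly compact, because any $Fo$-bounded subset of $G^{*}$ is in particular norm bounded and is therefore mapped by $T^{*}$ to a relatively weakly compact set. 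This yields $T^{*}\in W_{\tilde{o}}(G^{*},E^{*})$.

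Part (ii) is the mirror image. Since $G^{*}$ has an order unit, item (ii) of the Remark applied to $T^{*}$ shows that $T^{*}$ is weakly compact; by Gantmacher's theorem $T^{**}:E^{**}\to G^{**}$ is weakly compact, and since $T$ is the restriction of $T^{**}$ to the canonical image of $E$ in $E^{**}$ (with $G$ identified with a sublattice of $G^{**}$), the operator $T:E\to G$ is weakly compact. Item (i) of the Remark then gives that $T$ is $\tilde{o}$-weakly compact.

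The hard part — really the only subtlety — will be the bookkeeping between the two lattice structures: one must check that ``$Fo$-bounded'' for subsets of the duals $G^{*}$ and $E^{*}$ is interpreted consistently (this is precisely why the hypothesis $G^{*}\subseteq F$ is imposed) and that with this interpretation $Fo$-boundedness still forces norm boundedness, so that the ``weakly compact $\Rightarrow$ $\tilde{o}$-weakly compact'' half of the Remark applies verbatim in the dual setting. Beyond that, one only needs that $\tilde{o}$-weakly compact operators are continuous by definition, so that forming adjoints (and, if $G$ is not already complete, passing to completions, which does not change the weak topology) causes no difficulty, and that Gantmacher's theorem is available for the bounded operators between the Banach spaces in question.
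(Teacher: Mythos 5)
Your proposal is correct and follows essentially the same route as the paper: reduce to classical weak compactness via the order-unit hypothesis (Remark ii), apply Gantmacher's theorem for the adjoint, and then use that every weakly compact operator is $\tilde{o}$-weakly compact (Remark i); the paper treats part (ii) simply as ``similar,'' which matches your mirror argument through $T^{**}$. Your extra attention to how $Fo$-boundedness is interpreted in the dual (the role of $G^*\subseteq F$) is a point the paper glosses over, but it does not change the argument.
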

\begin{proof}
	\begin{enumerate}
		\item[i)] Let $E$ has an order unit and $T:E\to G$ is a $\tilde{o}$-weakly compact operator. It is obvious that $T$ is a weakly compact operator. By Theorem    5.23 of \cite{1}, $T^*$ is a weakly compact operator and therefore, $T^*$  is a $\tilde{o}$-weakly compact operator. \label{gf}
	\item[ii)] The proof is similar to (\ref{gf}).
	\end{enumerate}
\end{proof}


\begin{theorem}
	Let $T:F\to X$ be an operator. $T\mid_{E}:E\to X$ is $\tilde{o}$-weakly compact if and only if   $T(A)$ is relatively weakly compact for each $F$-almost order bounded set $A\subseteq E$.
	\end{theorem}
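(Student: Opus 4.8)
The plan is to prove the two implications separately; the reverse one is essentially immediate, while the forward one needs a decomposition trick followed by an approximation argument.

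For the reverse implication, suppose $T(A)$ is relatively weakly compact for every $F$-almost order bounded $A\subseteq E$, and let $B\subseteq E$ be $Fo$-bounded, say $B\subseteq[x,y]$ with $x,y\in F$. Putting $u=|x|\vee|y|\in F^{+}$ we have $B\subseteq[-u,u]\subseteq[-u,u]+\varepsilon B_{E}$ for every $\varepsilon>0$, so $B$ is $F$-almost order bounded; hence $T(B)$ is relatively weakly compact, which says precisely that $T\mid_{E}$ is $\tilde{o}$-weakly compact.

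For the forward implication, suppose $T\mid_{E}$ is $\tilde{o}$-weakly compact and let $A\subseteq E$ be $F$-almost order bounded; we must show $T(A)$ is relatively weakly compact. The crucial observation is that the ``order part'' in the almost-order-boundedness decomposition already lives in $E$. Fix $\varepsilon>0$ and pick $u_{\varepsilon}\in F^{+}$ with $A\subseteq[-u_{\varepsilon},u_{\varepsilon}]+\varepsilon B_{E}$. For each $x\in A$ write $x=a_{x}+b_{x}$ with $-u_{\varepsilon}\leq a_{x}\leq u_{\varepsilon}$ in $F$ and $b_{x}\in\varepsilon B_{E}$; since $x\in E$ and $b_{x}\in E$, also $a_{x}=x-b_{x}\in E$, so $a_{x}$ belongs to $A_{\varepsilon}:=\{z\in E:-u_{\varepsilon}\leq z\leq u_{\varepsilon}\ \text{in}\ F\}$, an $Fo$-bounded subset of $E$. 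Because $T\mid_{E}$ is $\tilde{o}$-weakly compact, the weak closure $W_{\varepsilon}$ of $T(A_{\varepsilon})$ in $X$ is weakly compact, and from $A\subseteq A_{\varepsilon}+\varepsilon B_{E}$ together with linearity we obtain $T(A)\subseteq T(A_{\varepsilon})+\varepsilon\,T(B_{E})\subseteq W_{\varepsilon}+\varepsilon\Vert T\mid_{E}\Vert\,B_{X}$.

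It then remains to deduce that $T(A)$ itself is relatively weakly compact from the fact that, for every $\delta>0$, it sits inside a translate $W+\delta B_{X}$ of a weakly compact set $W$ (take $W=W_{\delta/\Vert T\mid_{E}\Vert}$; the case $T\mid_{E}=0$ is trivial). This is exactly the assertion that $T(A)$ has vanishing De Blasi measure of weak noncompactness, which is a standard characterization of relative weak compactness. If one prefers a self-contained argument, it can be given via the Eberlein--\v{S}mulian theorem: for a sequence $(Tx_{n})\subseteq T(A)$, decompose $x_{n}=a_{n}^{(k)}+b_{n}^{(k)}$ as above for each $k\in\mathbb{N}$, use relative weak compactness of each $T(A_{1/k})$ and a diagonal extraction to obtain a subsequence $(n_{j})$ along which $(Ta_{n_{j}}^{(k)})_{j}$ converges weakly for every $k$, and then check that $(Tx_{n_{j}})_{j}$ is weakly Cauchy, its limit lying in $X$ by completeness (one also notes that $T(A)$ is norm bounded, since $T(A)\subseteq W_{1}+\Vert T\mid_{E}\Vert\,B_{X}$). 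I expect this last passage — from ``approximable by weakly compact sets up to arbitrarily small norm balls'' to ``relatively weakly compact'' — to be the only genuine difficulty; everything else is routine bookkeeping once the in-$E$ decomposition is spotted.
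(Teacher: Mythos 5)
Your proof is correct and takes essentially the same route as the paper: write $T(A)\subseteq W_\varepsilon+\varepsilon\Vert T\Vert B_X$ with $W_\varepsilon$ weakly compact and invoke Grothendieck's lemma (Theorem 3.44 of the paper's reference [1]), which is precisely the De Blasi-type characterization you appeal to; you are in fact a bit more careful than the paper in observing that the order-interval part of the decomposition can be taken inside $E$, so that $\tilde{o}$-weak compactness of $T\mid_E$ genuinely applies. One caution: your optional Eberlein--\v{S}mulian sketch is not quite right as stated, since a weakly Cauchy sequence need not have a weak limit in $X$ by norm completeness alone, but this does not affect the main argument, which correctly rests on the cited standard lemma.
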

\begin{proof}
	If $T(A)$ is relatively weakly compact  for each $F$-almost order bounded subset $A$ of $E$, it is obvious that $T\mid_E$ is $\tilde{o}$-weakly compact.
	
	Conversely, let $A\subseteq E$ be a $F$-almost order bounded. Therefore, there exists a $u\in F^+$ that $A\subseteq [-u,u] +\varepsilon B_E$. It is obvious that $T(A) \subseteq T[-u,u] + \varepsilon T(B_E)$. Since $T$ is $\tilde{o}$-weakly compact, hence $T[-u,u]$ is a relatively weakly compact. By Theorem 3.44 of \cite{1}, $T(A)$ is a relatively weakly compact  in $X$.
	\end{proof}









\end{document}